\renewcommand{\geq}{\geqslant}
\renewcommand{\leq}{\leqslant}
\numberwithin{equation}{section}
\title{A maximin characterisation of the escape rate of non-expansive mappings in metrically convex spaces}
\author{St\'ephane Gaubert\footnote{INRIA Saclay \& Centre de
Math\'ematiques Appliqu\'ees (CMAP), \'Ecole Polytechnique, 91128
Palaiseau, France. email: Stephane.Gaubert@inria.fr, fax: +33 1 69 33 46 46} \and Guillaume
Vigeral\footnote{Universit\'e Paris-Dauphine, CEREMADE, Place du Mar{\'e}chal De Lattre de Tassigny. 75775 Paris cedex 16, France. email: guillaumevigeral@gmail.com}}
\date{August 1, 2011%
}
\newcommand{\trace}{\operatorname{tr}}
\newcommand{\basepoint}[1]{\bar{#1}}
\newcommand{\centerpoint}[1]{{#1}^\circ}
\newtheorem{theorem}{Theorem}%
\newtheorem{proposition}[theorem]{Proposition}
\newtheorem{lem}[theorem]{Lemma}
\newtheorem{corollary}[theorem]{Corollary}
\theoremstyle{definition}
\newtheorem{defi}[theorem]{Definition}
\theoremstyle{remark}
\newtheorem{remark}[theorem]{Remark}
\newtheorem{example}[theorem]{Example}
\newcommand{\Funk}{\operatorname{RFunk}}
\newcommand{\Extr}{\operatorname{Extr}}
\newcommand{\Int}{\operatorname{int}}
\newcommand{\closure}{\operatorname{clo}}
\newcommand{\Spec}{\operatorname{Spec}}
\newcommand{\sH}{\mathscr{H}}
\newcommand{\sM}{\mathscr{M}}
\newcommand{\RR}{\mathbb{R}}
\def\<#1,#2>{\langle #1,#2\rangle}
\def\s(#1,#2){\<#1,#2>}
\newcommand{\T}{\mathbb{T}}
\newcommand{\Q}{\mathbb{Q}}
\newcommand{\NN}{\mathbb{N}}
\begin{document}
\maketitle
\renewcommand{\thefootnote}{}
\footnotetext{This work was performed when the second author was with INRIA Saclay -- \^Ile-de-France and CMAP, \'Ecole Polytechnique, being supported by the Digiteo project DIM08 PASO number 3389. The first author was partially supported by the Arpege programme of the French National Agency of Research (ANR), project ``ASOPT'', number ANR-08-SEGI-005.}
\renewcommand{\thefootnote}{\footnotemark[\value{footnote}]}

\begin{abstract}
We establish a maximin characterisation of the linear escape rate
of the orbits of a non-expansive mapping on a complete (hemi-)metric space, under a mild form of Busemann's non-positive curvature condition
(we require a distinguished family of geodesics with a common
origin to satisfy a convexity inequality).
This characterisation, which involves horofunctions, generalises
the Collatz-Wielandt characterisation of the
spectral radius of a non-negative matrix.
It yields as corollaries a theorem of Kohlberg and Neyman (1981), concerning non-expansive maps in Banach spaces, a variant of a Denjoy-Wolff type theorem of Karlsson (2001), together with a refinement of a theorem of Gunawardena and Walsh (2003), concerning order-preserving positively homogeneous self-maps of symmetric cones. An application to zero-sum stochastic games is also given.
\end{abstract}

\section{Introduction}
A self-map $T$ of a metric space $(X,d)$ is {\em non-expansive} if
$d(T(x),T(y))\leq d(x,y)$. A general problem consists
in studying the asymptotic behaviour of the orbits of $T$.
This is motivated in particular by the celebrated
theorem of Denjoy and Wolff~\cite{denjoy,wolff1,wolff2} on the iteration of holomorphic self-maps of the unit disk (these maps are non-expansive
in Poincare's metric).
Other motivations arise from the cases of non-expansive self-maps of Banach spaces
and of self-maps of cones that are non-expansive in Hilbert's, Thompson's or Riemannian metric, which have received a considerable
attention, due in particular to applications in game theory~\cite{rosenbergsorin,NeSo}, discrete event systems~\cite{mxpnxp0}, quadratic optimal control or filtering~\cite{bougerol}, and non-linear Perron-Frobenius theory~\cite{Nu}.
Several Denjoy-Wolff type results, either in the setting of metric spaces,
or concerning the special case of cones
have appeared, see in particular~\cite{beardon,Ka,AGLN,nussbaum07,lins}.
The reader may consult the monograph~\cite{reichshoiket} for an overview of the field.

In this paper, we consider the linear {\em escape rate}
\[
\rho(T)= \lim_{k\to\infty}\frac{d(x,T^k(x))}{k} \enspace.
\]
The latter always exists, by a standard subadditive argument.
Our main result is the following maximin
characterisation of the escape rate.
\begin{theorem}\label{th-minimax}
Let $T$ be a non-expansive self-map of a complete metrically star shaped
hemi-metric space $(X,d)$. Then,
\begin{align}\label{e-minimax}
\inf_{y\in X}d(y,T(y)) = \rho(T) = \max_{h}\inf_{x\in X}h(T(x))-h(x)  \enspace,
\end{align}
where the maximum, which is attained, is taken over the set of Martin
functions $h$ of $(X,d)$. If in addition the hemi-metric $d$ is bounded
from below (in particular, if $d$ is a metric),
and if $\rho(T)>0$, then any function attaining the maximum is a horofunction.
\end{theorem}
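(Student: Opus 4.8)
The plan is to establish the three-way equality by a ring of inequalities, $\rho(T)\le \inf_y d(y,T(y))$ and $\sup_h\inf_x[h(T(x))-h(x)]\le\rho(T)$, together with two constructions that close the ring, and then to treat the horofunction refinement separately. The first inequality is routine: by non-expansiveness $d(T^j(x),T^{j+1}(x))\le d(x,T(x))$, so the triangle inequality of the hemi-metric gives $d(x,T^k(x))\le k\,d(x,T(x))$, whence $\rho(T)\le d(x,T(x))$ for every $x$ and therefore $\rho(T)\le\inf_y d(y,T(y))$. For the second inequality I would use that every Martin function $h$ is non-expansive for $d$ (established in the preliminary sections): writing $c=\inf_x[h(T(x))-h(x)]$, iteration gives $h(T^k(x))\ge h(x)+kc$, while non-expansiveness gives $h(T^k(x))-h(x)\le d(x,T^k(x))$; dividing by $k$ and letting $k\to\infty$ yields $c\le\rho(T)$, hence $\sup_h\inf_x[h(T(x))-h(x)]\le\rho(T)$. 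Throughout I would record that $a_n=d(\centerpoint{x},T^n(\centerpoint{x}))$ is subadditive, so $a_n/n\to\rho(T)=\inf_n a_n/n$ and $a_n\ge n\rho(T)$.

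The first and, I expect, more delicate construction produces a Martin function attaining the supremum. I would fix the common origin $\centerpoint{x}$ of the distinguished geodesics, set $p_n=T^n(\centerpoint{x})$, consider the elementary functions $h_{p_n}(\cdot)=d(\centerpoint{x},p_n)-d(\cdot,p_n)$, and extract, using compactness of the Martin space, a limit $h^\ast=\lim_k h_{p_{n_k}}$ along a subsequence selected by a Fekete-type argument so that the increments of $a_n$ along $n_k$ stay close to $\rho(T)$. Non-expansiveness gives $d(T(x),p_{n_k})\le d(x,p_{n_k-1})$, so $h^\ast(T(x))-h^\ast(x)\ge\liminf_k[d(x,p_{n_k})-d(x,p_{n_k-1})]$. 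The hard point is to bound this $\liminf$ below by $\rho(T)$ \emph{uniformly in $x$}, and this is exactly where the Busemann convexity inequality for the radial geodesics is indispensable: it forces the functions $t\mapsto d(x,\gamma(t))$ to be convex, so their asymptotic slopes dominate the escape rate regardless of $x$. This yields $\inf_x[h^\ast(T(x))-h^\ast(x)]\ge\rho(T)$, which combined with the second inequality shows that the supremum is attained and equals $\rho(T)$.

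The second construction proves the remaining inequality $\inf_y d(y,T(y))\le\rho(T)$. Here I would build explicit approximate minimisers of the displacement by a Cesàro/barycentric averaging of the orbit $\centerpoint{x},p_1,\dots,p_{N-1}$ performed along the distinguished radial geodesics: if $y_N$ denotes such an average, then non-expansiveness sends $T(y_N)$ to the corresponding average of $p_1,\dots,p_N$, and the two averages differ only through the replacement of $p_0$ by $p_N$; the convexity inequality makes the averaging $1/N$-Lipschitz in that point, so $d(y_N,T(y_N))\lesssim a_N/N\to\rho(T)$. I expect the main technical obstacle of the whole proof to lie here and in the dual uniform bound of the previous paragraph: making the averaging rigorous using only geodesics emanating from $\centerpoint{x}$ (rather than arbitrary barycentres), and securing the uniform-in-$x$ lower bound, both of which rest on the single convexity hypothesis.

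Finally, for the refinement, suppose $h$ attains the maximum with value $\rho(T)>0$ and, for contradiction, that $h$ is an internal Martin function, that is $h=h_y$ for some $y\in X$. Then $\inf_x[h(T(x))-h(x)]=\rho(T)$ reads $d(x,y)-d(T(x),y)\ge\rho(T)$ for all $x$; applying this along $x=T^{k-1}(y)$ and telescoping gives $d(T^k(y),y)\le d(y,y)-k\rho(T)=-k\rho(T)\to-\infty$, contradicting the hypothesis that $d$ is bounded from below. Hence any maximiser must be a boundary point of the Martin space, that is, a horofunction; this is precisely the step in which both $\rho(T)>0$ and the lower boundedness of $d$ are used.
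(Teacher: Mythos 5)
Your two soft inequalities ($\rho(T)\leq\overline{\rho}(T)$ and weak duality $\sup_h\inf_x[h(T(x))-h(x)]\leq\rho(T)$ via the $1$-Lipschitz property of Martin functions) and your final telescoping argument for the horofunction refinement are correct and essentially identical to the paper's (Lemma~\ref{inegalitefacile}, Corollary~\ref{coregalite}, end of proof of Theorem~\ref{maintheorem}). But the heart of the theorem --- producing one Martin function whose increments dominate \emph{uniformly in} $x$ --- is not established by your sketch. Your construction takes limits of $i(p_{n_k})$ along the orbit $p_n=T^n(\centerpoint{x})$ with a Fekete-type selection; this is exactly Karlsson's argument (Theorem~\ref{thmKarlsson}), and it yields the pumping inequality only at the basepoint of the chosen orbit: for a general $x$ the increments $d(x,p_{n_k})-d(x,p_{n_k-1})$ differ from $a_{n_k}-a_{n_k-1}$ by an error of order $d(\centerpoint{x},x)$ which does not vanish. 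Your proposed rescue --- that star-shapedness makes $t\mapsto d(x,\gamma(t))$ convex, hence controls asymptotic slopes uniformly --- does not follow from Definition~\ref{defistarshaped}: the hypothesis compares only \emph{equal-parameter} points on two geodesics issued from the \emph{common centre} $\centerpoint{x}$, and says nothing about the distance from an arbitrary point $x$ to a point moving along a radial geodesic (that would be Busemann's full convexity, which the paper explicitly does not assume). Moreover, the orbit points $p_{n_k}$ do not lie on radial geodesics, so no link between those slopes and the needed increments is made. The paper's actual mechanism is entirely different: the radial retraction $r^\alpha(y)=\gamma_y(\alpha)$ is an $\alpha$-contraction by~\eqref{eqdefstarshaped}, so $T\circ r^\alpha$ has a unique fixed point $y_\alpha$ (this is where completeness enters --- your proof never uses completeness, which is a symptom of the gap), and limit points of $i(y_\alpha)$ as $\alpha\to1$ satisfy $h(T(x))-h(x)\geq\overline{\rho}(T)$ for \emph{all} $x$.

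The second construction you propose is also unsound, and in the paper's scheme it is unnecessary. Non-expansive maps do not commute with barycentres: even in a Banach space, $T\bigl(\frac1N\sum_{j<N}p_j\bigr)$ is \emph{not} the average of $p_1,\dots,p_N$, so the claim that ``non-expansiveness sends $T(y_N)$ to the corresponding average'' fails at the first step; and in a metrically star-shaped space a barycentre of $N$ points is not even defined (only geodesics from $\centerpoint{x}$ are available). The paper avoids needing any such argument precisely because its fixed-point construction gives the increment bound with the \emph{larger} constant $\overline{\rho}(T)$ rather than $\rho(T)$: combining $h(T^k(x))\geq h(x)+k\overline{\rho}(T)$ with $h(T^k(x))-h(x)\leq\delta(x,T^k(x))$ and dividing by $k$ yields $\overline{\rho}(T)\leq\rho(T)$ for free (Corollary~\ref{coregalite}), closing the ring. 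By aiming your construction only at $\rho(T)$, you created the need for a separate proof of $\overline{\rho}(T)\leq\rho(T)$ that you cannot supply. So both of the steps you yourself flag as ``the main technical obstacles'' are genuine gaps, and the missing idea in both cases is the same one: the contraction fixed points $y_\alpha$ of $T\circ r^\alpha$.
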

Let us explain the terminology.
{\em Hemi-metrics} are
analogous to metrics, but $d(x,y)$ and $d(y,x)$ may differ.
Their definition is a variation of that of {\em weak metrics}
in~\cite{papadopoulos}.
The notion of {\em metrically star shaped} space is the object
of Definition~\ref{defistarshaped} below.
It requires a distinguished family of geodesics
with a common origin to satisfy a convexity inequality. Hence,
it is a mild form of Busemann's classical non-positive curvature
condition~\cite{Papa05}.
{\em Martin functions} and {\em horofunctions} are special
Lipschitz functions of constant $1$ which arise when compactifying
the hemi-metric space $(X,d)$.
The set of horofunctions (the {\em horoboundary})
provides an abstract boundary of $X$.
See~\S\ref{subsec-horo} for details.

Theorem~\ref{th-minimax} is inspired by
the classical Collatz-Wielandt characterisation
of the Perron root $\rho(M)$ of a $n\times n$ non-negative matrix $M$. The latter shows
that
\begin{align}
\inf_{y\in \Int\RR_+^n}\max_{1\leq i\leq n}\frac{(My)_i}{y_i}
= \rho(M) = \max_{\scriptstyle u\in \RR_+^n\atop \scriptstyle u\neq 0} \min_{\scriptstyle 1\leq i\leq n\atop \scriptstyle  u_i \neq 0}
\frac{(Mu)_i}{u_i} \enspace .
\label{e-cw}
\end{align}
Here, $\RR_+$ denotes the set of real non-negative numbers, and so
$\RR_+^n$ is the standard positive cone.
Nussbaum showed in~\cite{nussbaum86} that the latter formula holds more generally when the map
$y\mapsto My$ is replaced by a non-linear continuous order-preserving $T$
of the standard positive cone.
When $T$ preserves the interior
of this cone, it is
non-expansive in
the (reverse) {\em Funk hemi-metric}~\cite{papadopoulos,walsh}
defined on this interior by
 \[
\Funk(x,y):= \log\sup_{%
1\leq i\leq n
}\frac{y_i}{x_i}\enspace .
\]
Thus,
\[
\Funk(T(x),T(y)) \leq \Funk(x,y) \enspace ,\qquad \forall x,y\in \Int \RR_+^n \enspace .
\]
The Collatz-Wielandt formula and its non-linear extension in~\cite{nussbaum86}
will be shown to be special instances of the maximin formula~\eqref{e-minimax}.

Theorem~\ref{th-minimax} is also motivated by the case of a Banach space $X$.
Then, %
one may consider the limit
\begin{align}\label{e-limit}
\lim_{k\to\infty} T^k(x)/k \enspace .
\end{align}
For instance, in the setting of zero-sum games, $T$ is the dynamic programming
or {\em Shapley operator}~\cite{shapley} (which acts on a Banach space of continuous functions equipped with the sup-norm), and the latter expression represents the limit of the mean payoff per time unit as a function of the initial position, when the horizon of the game tends to infinity, see \S\ref{appli-shapley} for more background.

A theorem of Pazy~\cite{pazy} shows that the limit~\eqref{e-limit} does exist
if $X$ is a Hilbert space; further improvements, under suitable strict convexity assumptions were done by Reich\cite{reich} and then by Kohlberg and Neyman~\cite{KoNe}:
the limit exists in the weak (resp.\ strong) topology if $X$ is reflexive and strictly convex (resp.\ if the norm of the dual space $X^\star$ is Fr\'echet differentiable).
Without strict convexity, the limit may not exist, even when $X$ is finite dimensional, however, a general result of Kohlberg and Neyman~\cite{KoNe}
(Corollary~\ref{linear} below) shows
that there is always a linear form $\varphi\in X^\star$ of norm $1$ such that
\[
\varphi(T^k(x))\geq \varphi(x)+k\overline{\rho}(T)
\]
holds for all $k\in\mathbb{N}$, where
\[
\overline{\rho}(T):= \inf_{y\in X}d(y,T(y))
\]
is precisely the term at the left-hand side of~\eqref{e-minimax}.
The theorem of Kohlberg and Neyman may also be thought of as
a special case of Theorem~\ref{th-minimax}. Indeed, any horofunction
$h$ attaining the maximum in~\eqref{e-minimax} satisfies
\begin{align}\label{e-ourineq}
h(T^k(x))\geq h(x)+k\overline{\rho}(T) \enspace, \forall x\in X
\end{align}
for all $k\in\mathbb{N}$. We shall see that the Kohlberg-Neyman theorem follows readily from this result. We shall also recover as corollaries a generalisation
of a result of Gaubert and Gunawardena~\cite{GG04}
(Corollary~\ref{linearcompact}) concerning Shapley operators, as well as a refinement of a result
 of Gunawardena and Walsh~\cite{gunawardenawalsh} valid in symmetric cones,
Corollary~\ref{cor-symm} below.

Theorem~\ref{th-minimax} turns out to be related to a Denjoy-Wolff type theorem of Karlsson~\cite{Ka}, Theorem~\ref{thmKarlsson} below. Karlsson showed
that an inequality similar to~\eqref{e-ourineq} holds.
He used a general subadditivity
argument, which does not require any non-positive curvature condition.
The statement of~\cite{Ka} assumes the metric space to be proper,
i.e., closed balls to be compact, but this assumption can be relaxed
by defining horofunctions with respect to the topology of pointwise convergence as we do here.
However,
there are essential discrepancies between~\eqref{e-ourineq}
and the result of~\cite{Ka}: the term $\overline{\rho}(T)$ is replaced there
by the linear escape rate $\rho(T)$, and the horofunction
$h$ {\em depends} on the choice of the point $x$, whereas it does
not in~\eqref{e-ourineq}. We shall
see that without a non-positive curvature condition, the
formula~\eqref{e-ourineq} may not hold
(we give an example in which $\overline{\rho}(T)>\rho(T)$
and $h$ necessarily depends on $x$). Hence, the
maximin Theorem~\ref{th-minimax} holds only under
more restrictive circumstances than the
Denjoy-Wolff type result of Karlsson. By comparison,
the interest of the more special Theorem~\ref{th-minimax}
lies in its strong duality nature: it allows one to
certify that $\rho(T)<\alpha$, or dually,
that $\rho(T)>\beta$, by exhibiting an element $y\in X$
such that $d(y,T(y))<\alpha$, or dually a Martin function $h$
such that $\inf_{x\in X}h(T(x))-h(x)> \beta$.
It should
be noted in this respect that when $X$ is a Banach space with a polyhedral norm,
the horoboundary admits a simple effective description, see~\cite{MR2268510,walsh07,AGW}.

\section{Definitions and preliminary results}
\subsection{Metrically star-shaped spaces}\label{subsec-hemi}
\begin{defi}
We say that $\delta:X\times X\rightarrow\mathbb{R}$ is a
{\em hemi-metric} on a set $X$ if the two following conditions are
satisfied for all $(x,y,z)\in X^3$:
\begin{enumerate}
\item\ $\delta(x,z)\leq \delta(x,y)+\delta(y,z)$
\item\ $\delta(x,y)=\delta(y,x)=0$ if and only if $x=y$.
\end{enumerate}
We then say that $(X,\delta)$ is a hemi-metric space.
\end{defi}
Notice that a hemi-metric is generally not a metric, since it is
neither symmetric nor non-negative.
This definition is closely related to the one of a {\em weak metric}
in~\cite{papadopoulos}, in which $\delta$ is required to be non-negative.
The last condition of the definition corresponds to the {\em weak separation} condition in the latter reference. We allow $\delta$ to take
negative values in order to deal with order-preserving
positively homogeneous self-maps of cones (Section~\ref{sec-cones}).

To any hemi-metric,
one can canonically associate a metric by the following
lemma.

\begin{lem}
For any hemi-metric $\delta$, the function
$d(x,y)=\max(\delta(x,y),\delta(y,x))$ is a metric on $X$.
\end{lem}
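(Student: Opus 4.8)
The plan is to verify the three metric axioms for $d(x,y) = \max(\delta(x,y), \delta(y,x))$ directly from the two defining properties of the hemi-metric $\delta$. The statement to prove is that $d$ is a metric, so I must check symmetry, the separation/identity axiom, non-negativity, and the triangle inequality. Symmetry is immediate since $\max$ is a symmetric operation: $d(x,y) = \max(\delta(x,y),\delta(y,x)) = \max(\delta(y,x),\delta(x,y)) = d(y,x)$.

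For the identity of indiscernibles, I would argue as follows. If $x = y$, then property (b) of the hemi-metric gives $\delta(x,x) = 0$, so $d(x,x) = 0$. Conversely, if $d(x,y) = 0$, then since $d(x,y) = \max(\delta(x,y),\delta(y,x))$, both $\delta(x,y) \leq 0$ and $\delta(y,x) \leq 0$; but adding the triangle inequality (a) applied to the triple $(x,y,x)$ gives $0 = \delta(x,x) \leq \delta(x,y) + \delta(y,x) \leq 0$, forcing $\delta(x,y) + \delta(y,x) = 0$ together with each summand being at most $0$, hence $\delta(x,y) = \delta(y,x) = 0$, and property (b) then yields $x = y$. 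The same chain shows non-negativity: for any $x,y$, the inequality $0 = \delta(x,x) \leq \delta(x,y) + \delta(y,x)$ means the two values $\delta(x,y)$ and $\delta(y,x)$ cannot both be negative, so their maximum $d(x,y)$ is non-negative.

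The triangle inequality is the only step requiring a small idea rather than a one-line observation, and it is the part I would expect to be the main (though still routine) obstacle. I want $d(x,z) \leq d(x,y) + d(y,z)$. Writing $d(x,z) = \max(\delta(x,z), \delta(z,x))$, I treat the two candidates for the maximum separately. From property (a), $\delta(x,z) \leq \delta(x,y) + \delta(y,z) \leq \max(\delta(x,y),\delta(y,x)) + \max(\delta(y,z),\delta(z,y)) = d(x,y) + d(y,z)$, where I simply bounded each summand by the corresponding maximum. Symmetrically, applying (a) to the reversed triple, $\delta(z,x) \leq \delta(z,y) + \delta(y,x) \leq d(y,z) + d(x,y)$. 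Since both $\delta(x,z)$ and $\delta(z,x)$ are bounded by $d(x,y) + d(y,z)$, their maximum $d(x,z)$ is too, which is exactly the desired inequality. Collecting the four verified properties completes the proof that $d$ is a metric.
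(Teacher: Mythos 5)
Your proof is correct and follows essentially the same route as the paper: the paper also dismisses symmetry and the triangle inequality as routine, derives non-negativity from $0=\delta(x,x)\leq\delta(x,y)+\delta(y,x)$, and handles separation by noting that $d(x,y)=0$ forces both $\delta(x,y)$ and $\delta(y,x)$ to be non-positive, hence zero. Your write-up simply spells out the steps the paper leaves as exercises.
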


\begin{proof}
Verifying that $d$ is symmetric and satisfies the triangular
inequality is easy. The positivity of $d$ comes from the fact that
for all $x$ and $y$, $0=\delta(x,x)\leq\delta(x,y)+\delta(y,x)$.
Finally, if $d(x,y)=0$ then both $\delta(x,y)$ and $\delta(y,x)$ are
non-positive so by the same argument they are both null, thus $x=y$.
\end{proof}

In the sequel, $X$ is equipped with the topology induced by the
metric $d$. We shall say that $(X,\delta)$ is {\em complete} when the
associated metric space $(X,d)$ is complete.
\begin{defi}
A geodesic joining a point $x\in X$ to a point $y\in X$ is a map
$\gamma:[0,1]\rightarrow X$ such that $\gamma(0)=x$, $\gamma(1)=y$,
and such that for all $0\leq s\leq t\leq 1$,
\[
\delta(\gamma(s),\gamma(t))=(t-s)\delta(x,y).
\]
\end{defi}

\begin{defi}\label{defistarshaped}
We say that $(X,\delta)$ is {\em metrically star-shaped} with centre
$\centerpoint{x}$ if there exists a family of geodesics $\{\gamma_y\}_{y\in
X}$, such that $\gamma_y$ joins the centre $\centerpoint{x}$ to the point
$y$, and such that the following inequality is satisfied for every
$(y,z)\in X^2$ and $s\in[0,1]$:
\begin{equation}\label{eqdefstarshaped}
\delta\left(\gamma_y(s),\gamma_z(s)\right)\leq s\delta(y,z).
\end{equation}
If the hemi-metric $\delta$ is not a metric, we also require that for any $y$, the quantity $\delta(y,\gamma_y(s))$ tends to 0 as $s$ goes to 1.
\end{defi}
The condition~\eqref{eqdefstarshaped}
is a form of {\em metric convexity}~\cite{Papa05}.
In particular any Busemann space~\cite{Papa05} is metrically
star-shaped, but our definition is less demanding since we only
require the inequality~\eqref{eqdefstarshaped} to be satisfied for
one specific choice of geodesics.

\begin{example}
Any Banach space $(X,\|\cdot\|)$ is metrically star-shaped with
respect to any centre $\centerpoint{x}$ : it suffices to take the straight
lines as geodesics, i.e., for
any choice of centre $\centerpoint{x}$, the choice of
$\gamma_y(s)=\centerpoint{x}+s(y-\centerpoint{x})$ yields a metrically
star shaped space. Notice that some Banach spaces are not Busemann
spaces~\cite{Papa05}.
\end{example}
The next example relies on the following notion.
\begin{defi}\label{def-heminorm}
A map $p$ from a Banach space $(X,\|\cdot\|)$ to $\mathbb{R}$ is a  {\em hemi-norm} if
it can be written as
\begin{align}
p(z)=\sup_{\phi \in E} \phi(z)
\label{e-hemi-norm}
\end{align}
where $E$ is a bounded subset of the dual space $X^{\star}$, and
\begin{align}\label{cond-hemi}
p(z)=p(-z)=0\implies z= 0 \enspace .
\end{align}
\end{defi}
A hemi-norm is always Lipschitz.
It is easily verified that if $p$ is a hemi-norm, %
then $\delta(x,y):= p(y-x)$ is a hemi-metric. Note that Condition~\eqref{cond-hemi} holds if and only if the orthogonal set of $E$ is reduced to the zero vector. As in the case of Banach spaces, $(X,\delta)$ is metrically star shaped: it suffices to choose
the straight lines as geodesics. We shall say that
the hemi-norm is {\em compatible} with the Banach space $(X,\|\cdot\|)$
if $\|z\|=\max(p(z),p(-z))$.  Then, the metric $d$ obtained
from $\delta$ is the one associated to the norm of $X$, and
in particular, the hemi-metric space $(X,\delta)$ is complete.

\begin{example}
The norm of the Banach space $X$ is a special case of compatible hemi-norm,
in which
 \[p(z)=\|z\|=\sup_{\phi \in B^{\star}} \psi(z)\]
where $ B^{\star}$ is the dual unit ball of $X^{\star}$.
Then, by Bauer maximum principle~\cite[Theorem~7.69]{aliprantisborder}, the supremum is attained at some extreme point of $ B^{\star}$, so the set $E$
arising in the definition of the hemi-norm
(see Equation~\ref{e-hemi-norm})
can be taken to be either $B^{\star}$ or the set of extreme points of $B^{\star}$.
\end{example}
\begin{example}\label{exampletop}
A useful example of hemi-norm is the following.
Let $\Omega$ denote a compact topological space, let $X=\mathscr{C}(\Omega)$
denote the Banach space of continuous functions from $\Omega$ to $\RR$,
equipped with the sup-norm. Consider $\delta(x,y)=t(y-x)$, where $t$ denotes the ``top'' operator which gives the maximum of a function,
i.e.,
\[
t(x)=\max_{\omega\in\Omega}x(\omega) \enspace.
\]
The metric $d$ obtained from $\delta$ is the sup-norm.
We can take for $E$ the set of evaluations functions $\{e_\omega, \omega\in \Omega\}$ with $e_\omega(x)=x(\omega)$.
\end{example}

\begin{example}
Another class of examples concern symmetric cones,
which include cones of positive semidefinite matrices.
We shall discuss them further in Section~\ref{sec-cones}.
\end{example}

\subsection{Non-expansive mappings}

In this section we consider a metrically star-shaped space $(X,\delta)$ with centre $\centerpoint{x}$, as well as a map $T:X\rightarrow X$ that is non-expansive with respect to the hemi-metric $\delta$, meaning that for all $(x,y)\in X^2$,
\[
\delta(T(x),T(y))\leq \delta(x,y).
\]

\begin{defi}
To each non-expansive mapping $T$, we associate the two following quantities:
\begin{equation}
\overline{\rho}(T)=\inf_{x\in X} \delta\left(x,T(x)\right) \enspace,
\label{defrhobar}
\end{equation}
\begin{equation}\label{deftimecycle}
\rho(T)=\lim_{k\rightarrow+\infty}\frac{\delta\left(x,T^k(x)\right)}{k}=\inf_{k\geq1}\frac{\delta\left(x,T^k(x)\right)}{k} \enspace.
\end{equation}
\end{defi}
Thus, the number $\rho(T)$ measures the {\em linear escape rate} of the orbits of $T$. It is well defined. Indeed,
 for any $x\in X$,
since $T$ is non-expansive,
the sequence $u_k=\delta\left(x,T^k(x)\right)$ is {\em subadditive}, meaning that $u_{k+l}\leq u_k+u_l$ for all $k,l\geq 1$, and so, a classical argument (see~\cite[Lemma~1.18]{bowen}) shows that the limit $\lim_{k\rightarrow+\infty}k^{-1}\delta\left(x,T^k(x)\right)$ does exist and is equal to $\inf_{k\geq1}k^{-1}\delta\left(x,T^k(x)\right)$.
Moreover, since
    \begin{eqnarray*}
    \delta\left(x,T^k(x)\right)&\leq& \delta(x,y)+\delta\left(y,T^k(y)\right)+\delta\left(T^k(y),T^k(x)\right)\\
    &\leq& \delta(x,y)+\delta(y,x)+\delta\left(y,T^k(y)\right)
    \end{eqnarray*}
the limit is independent of the choice of $x\in X$.

 \begin{lem}\label{inegalitefacile}
 The following inequality is satisfied for any non-expansive mapping $T$:
 \[\rho(T)\leq\overline{\rho}(T).\]
 \end{lem}

 \begin{proof}
Observe that for any $x\in X$ and $k\geq1$,
\begin{align*}
\delta\left(x,T^k(x)\right)&\leq\sum_{l=0}^{k-1}\delta\left(T^l(x),T^{l+1}(x)\right)
\leq
k\delta(x,T(x))
\end{align*}
and take the infimum on both $k$ and $x$.
 \end{proof}

 \subsection{The horofunction boundary}\label{subsec-horo}
The horofunction boundary of a metric space was defined by Gromov~\cite{gromov78}. See also~\cite{gromov}, \cite[Ch.~II]{ballmann} and~\cite{rieffel}.
In fact, the same construction can be performed with a hemi-metric.
The details can be found in~\cite{AGW}, in which
a horofunction-like boundary is defined for discrete optimal control
problems, by analogy with the Martin compactification arising
in the theory of Markov processes. See also~\cite{ishiimitake}.

Let us fix an arbitrary point $\basepoint{x}\in X$ (the {\em basepoint}).
We define a map $i$ from $X$ to the set functions from $X$ to $\mathbb{R}$ by associating to any $x\in X$ the following function $i(x)$:
\[
i(x):y\rightarrow [i(x)](y):=\delta(\basepoint{x},x)-\delta(y,x).
\]
Using the triangular inequality, we get that, for all $x,y\in X$,
\[
-\delta(y,\basepoint{x})\leq [i(x)](y)\leq \delta(\basepoint{x},y)
\]
Hence, the set $i(X):=\{i(x)\mid x\in X\}$ can be identified to a
subset of the product space
 $\prod_{y\in X}[-\delta(y,\basepoint{x}),\delta(\basepoint{x},y)]\subset\RR^X$. By Tychonoff's theorem, the later space
is compact for the product topology.
Hence, the closure of $i(X)$ in the topology
of pointwise convergence (which is the same as the product
topology) is compact.
We denote by $\sM$ this closure. %
The elements of the {\em boundary} $\sH:=\sM\setminus i(X)$
are called {\em horofunctions}. We will call {\em Martin} functions
the elements of $\sM$ (the {\em Martin space}). Note that the choice of the basepoint
is irrelevant (changing the basepoint translates
all the Martin functions by the same constant).

\begin{remark}
The term of horofunction is sometimes used in a more restrictive sense, to denote what is also called a {\em Busemann function}, which is a horofunction obtained as the limit of a family of
functions $(i(x_s))_{s\geq 0}$ taken along an "infinite geodesic"
$(x_s)_{s\geq 0}$, see~\cite{rieffel}. Note also the sign in the definition of $i(x)$ (which implies, in the Banach space case, that the map $i(x)$ is concave).
This sign is chosen consistently with potential theory (the opposite of the distance is the analogue of the Martin kernel). In the metric geometry literature, the opposite choice is most often made.
\end{remark}
\begin{remark}\label{rk-technical}
The horofunctions are often defined as the closure of $i(X)$ in the topology
of uniform convergence on bounded sets;
indeed, the injection $x\mapsto i(x)$ is continuous for this topology,
and this injection is an embedding if $X$ is a complete
geodesic space~\cite[Ch.~II]{ballmann}. It can be checked that every
map $i(x)$ is Lipschitz of constant $1$ with respect to the metric $d$, and so,
by the Ascoli-Arzela theorem, the closure of $i(X)$ in this sense is compact as soon as $X$ is proper (meaning that every closed ball is compact).
In the present work, we do not require $X$ to be proper,
but define rather $\mathscr{M}$ as the closure of $i(X)$ in the topology of
pointwise convergence, so that $\mathscr{M}$ is always compact.
Note however that the injection $x\mapsto i(x)$ may {\em not}
be an embedding from $X$ to $\mathscr{M}$ (the topology on $\mathscr{M}$
is too weak in general, so the inverse of the map $x\mapsto i(x)$ may not be continuous).
\end{remark}
\begin{remark}\label{remark-topo-martin}
The topology of the Martin space is metrisable as soon as $X$ is a countable union
of compact sets, see for instance~\cite[Remark~7.10]{AGW}.
Then, every horofunction
is the limit of a sequence of functions $i(x_m)$, where $x_m$ is a sequence of elements of $X$.
\end{remark}
\section{The main result and some of its consequences}
\subsection{The main result}
We shall derive Theorem~\ref{th-minimax} from the following result.
\begin{theorem}\label{maintheorem}
Let $(X,\delta)$ be a complete metrically star-shaped hemi-metric space, and let $T:X\rightarrow X$ be non-expansive. Then there exists a Martin function $h\in\sM$ such that for all $x\in X$,
\[
h(T(x))\geq h(x)+\overline{\rho}(T) \enspace.
\]
Moreover, if $\delta$ is bounded from below (in particular, if $\delta$ is a metric) and if $\overline{\rho}(T)>0$, then $h$ is necessarily a horofunction.
\end{theorem}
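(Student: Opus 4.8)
The plan is to construct the Martin function $h$ as a limit of the normalised functions $i(y_n)$ associated with a well-chosen minimising sequence. Recall that $\overline{\rho}(T)=\inf_{x}\delta(x,T(x))$, so I would first fix $\epsilon>0$ and choose a point $y$ with $\delta(y,T(y))\leq \overline{\rho}(T)+\epsilon$; more precisely, I would produce points approaching the infimum and pass to the horofunction limit. The key is to exploit the metric star-shapedness to build an approximate fixed point of a suitable \emph{perturbed} map, and then let the perturbation vanish. Concretely, given the centre $\centerpoint{x}$ and the geodesic family $\{\gamma_y\}$, for each parameter $s\in[0,1)$ the assignment $x\mapsto \gamma_{T(x)}(s)$ is a contraction: by the convexity inequality~\eqref{eqdefstarshaped} one has $\delta(\gamma_{T(x)}(s),\gamma_{T(x')}(s))\leq s\,\delta(T(x),T(x'))\leq s\,\delta(x,x')$, and the same bound holds for the symmetrised metric $d$, so the map is $s$-Lipschitz for $d$. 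Since $(X,d)$ is complete, Banach's fixed point theorem gives a unique fixed point $x_s$ with $x_s=\gamma_{T(x_s)}(s)$.

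The next step is to estimate $\delta(x_s,T(x_s))$ along this family and to feed these points into the horofunction construction. Writing $z_s:=T(x_s)$, the fixed point relation says $x_s=\gamma_{z_s}(s)$ lies on the geodesic from $\centerpoint{x}$ to $z_s$; since $\delta(\centerpoint{x},z_s)$ should stay controlled as $s\to 1$, the geodesic identity $\delta(\gamma_{z_s}(s),z_s)=(1-s)\delta(\centerpoint{x},z_s)$ together with the extra requirement in Definition~\ref{defistarshaped} (that $\delta(y,\gamma_y(s))\to 0$) forces $\delta(x_s,T(x_s))\to 0$ in the one-sided sense needed, while still allowing $\overline{\rho}(T)$ to be detected. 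I would then use the basepoint $\basepoint{x}$ and consider the functions $i(T(x_s))$ (or $i(x_s)$); the crucial inequality to extract is that for every $x\in X$,
\begin{align*}
[i(T(x_s))](x)-[i(T(x_s))](T(x)) \geq \delta(x_s,T(x_s)) - (\text{error}),
\end{align*}
obtained by combining non-expansiveness $\delta(T(x),T(x_s))\leq\delta(x,x_s)$ with the triangle inequality applied to the definition $[i(w)](y)=\delta(\basepoint{x},w)-\delta(y,w)$. Taking a limit point $h\in\sM$ of $i(T(x_s))$ in the pointwise topology (compactness of $\sM$), the error terms vanish and the infimum over the chosen family tends to $\overline{\rho}(T)$, yielding $h(T(x))\geq h(x)+\overline{\rho}(T)$ for all $x$.

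The main obstacle I anticipate is twofold. First, I must verify that the fixed points $x_s$ really do witness the infimum $\overline{\rho}(T)$ as $s\to 1$ rather than some larger quantity; this requires carefully tracking how $\delta(x_s,T(x_s))$ relates to $\inf_x\delta(x,T(x))$, using that $x_s$ is a genuine fixed point of the contracted dynamics and that the geodesic correction $\gamma_{T(x_s)}(s)\to T(x_s)$ becomes negligible. Second, for the final clause I must show that when $\delta$ is bounded below and $\overline{\rho}(T)>0$, the limit $h$ cannot lie in $i(X)$, i.e.\ it is a genuine horofunction. The idea there is that if $h=i(w)$ for some $w\in X$, then the inequality $h(T(x))\geq h(x)+\overline{\rho}(T)$ applied iteratively gives $h(T^k(w))\geq h(w)+k\overline{\rho}(T)$, but $h=i(w)$ satisfies $[i(w)](T^k(w))=\delta(\basepoint{x},w)-\delta(T^k(w),w)$, which is bounded above independently of $k$ (using that $\delta$ is bounded below, $-\delta(T^k(w),w)$ stays bounded above). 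This contradicts $k\overline{\rho}(T)\to+\infty$, forcing $h\in\sH$. Checking these two estimates rigorously, and ensuring the pointwise limit interacts correctly with the one-sided hemi-metric asymmetry, is where the real work lies.
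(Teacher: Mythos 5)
Your construction coincides with the paper's: the paper takes the unique fixed point $y_\alpha$ of the $\alpha$-contraction $T\circ r^\alpha$ (with $r^\alpha(y):=\gamma_y(\alpha)$), while you take the fixed point $x_s$ of $r^s\circ T$; these are conjugate ($y_s=T(x_s)$ and $x_s=r^s(y_s)$), so your family $i(T(x_s))$ is exactly the paper's family $i(y_\alpha)$, and your argument for the last clause (iterate against $h=i(w)$ and contradict boundedness) is verbatim the paper's. The problem is the analytic core. Your ``crucial inequality'' is stated with the wrong sign: since $[i(w)](y)=\delta(\basepoint{x},w)-\delta(y,w)$, the quantity you bound from below is, in the limit, $h(x)-h(T(x))$, which would yield $h(T(x))\leq h(x)-\overline{\rho}(T)$; moreover the inequality is simply false. (Test it on $X=\RR$, $T(x)=x+1$, centre and basepoint $0$: then $x_s=s/(1-s)$, $y_s=1/(1-s)$, $i(y_s)$ converges pointwise to $h(y)=y$, and your inequality would force $h(x)-h(T(x))=-1\geq\overline{\rho}(T)=1$.) What must be proved is
\begin{align*}
[i(y_s)](T(x))-[i(y_s)](x)\;=\;\delta(x,y_s)-\delta(T(x),y_s)\;\geq\;\delta\bigl(x_s,T(x_s)\bigr)-(1-s)\delta(\centerpoint{x},x)-\delta\bigl(x,r^s(x)\bigr)\enspace,
\end{align*}
and this does not follow from ``non-expansiveness plus the triangle inequality'' alone: the chain uses the fixed-point relation, non-expansiveness, the triangle inequality through $r^s(x)$, the star-shaped inequality~\eqref{eqdefstarshaped} in the form $\delta(r^s(x),r^s(y_s))\leq s\,\delta(x,y_s)$, the triangle inequality through the centre, and the geodesic identity $\delta(x_s,y_s)=(1-s)\delta(\centerpoint{x},y_s)$. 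That chain is the whole proof; your sketch skips it and, as written, asserts its negation.

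Second, your picture of what happens as $s\to1$ is inverted, and both of your ``anticipated obstacles'' dissolve once it is corrected. It is not true that $\delta(\centerpoint{x},T(x_s))$ ``stays controlled'' or that $\delta(x_s,T(x_s))\to0$: by definition of the infimum, $\delta(x_s,T(x_s))\geq\overline{\rho}(T)$ for every $s$, hence $\delta(\centerpoint{x},T(x_s))=\delta(x_s,T(x_s))/(1-s)\geq\overline{\rho}(T)/(1-s)\to\infty$ whenever $\overline{\rho}(T)>0$; the fixed points escape to infinity, and it is precisely this escape that makes the limit a horofunction in the second clause (the extra requirement in Definition~\ref{defistarshaped} is only needed to make the error term $\delta(x,r^s(x))$ vanish for each \emph{fixed} $x$). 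Likewise, you never need $\delta(x_s,T(x_s))$ to converge to the infimum $\overline{\rho}(T)$: since the goal is a \emph{lower} bound on $h(T(x))-h(x)$, the trivial bound $\delta(x_s,T(x_s))\geq\overline{\rho}(T)$ is all that is used, and a ``larger quantity'' would only strengthen the conclusion. With the sign corrected, the chain of inequalities supplied, and these two erroneous claims removed, your outline becomes the paper's proof.
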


\begin{proof}
Let $\centerpoint{x}$ and $\{\gamma_y\}_{y\in X}$ as in Definition \ref{defistarshaped}; for any $\alpha\in[0,1[$ denote by $r^\alpha:X\rightarrow X$ the function $y\rightarrow \gamma_y(\alpha)$ and recall that $r^\alpha$ is $\alpha$-contracting by definition. By completeness, we can thus define for any $\alpha$ the point $y_\alpha\in X$ as the only solution of the fixed point equation
\[
T(r^\alpha(y_\alpha))=y_\alpha.
\]
Then for any $x\in X$,
\begin{eqnarray*}
\delta(x,y_\alpha)-\delta\left(T(x),y_\alpha\right)&=&\delta(x,y_\alpha)-\delta\left(T(x),T(r^\alpha(y_\alpha))\right)\\
&\geq&\delta(x,y_\alpha)-\delta\left(x,r^\alpha(y_\alpha)\right)\quad \text{(by non-expansiveness)}\\
&\geq&\delta(x,y_\alpha)-\delta\left(x,r^\alpha(x)\right)-\delta\left(r^\alpha(x),r^\alpha(y_\alpha)\right)\\
&\geq&(1-\alpha)\delta(x,y_\alpha)-\delta\left(x,r^\alpha(x)\right)\quad \text{(since $r^\alpha$ is an $\alpha$-contraction)}\\
&\geq&(1-\alpha)\delta(\centerpoint{x},y_\alpha)-(1-\alpha)\delta(\centerpoint{x},x)-\delta\left(x,r^\alpha(x)\right)\\
&=&\delta(r^\alpha(y_\alpha),y_\alpha)-(1-\alpha)\delta(\centerpoint{x},x)-\delta\left(x,r^\alpha(x)\right)\\
&\geq&\delta\left(T(r^\alpha(y_\alpha),T(y_\alpha))\right)-(1-\alpha)\delta(\centerpoint{x},x)-\delta\left(x,r^\alpha(x)\right)\\
&&\qquad\qquad \qquad\qquad\qquad\qquad\text{(by non-expansiveness)}\\
&=&\delta\left(y_\alpha,T(y_\alpha))\right)-(1-\alpha)\delta(\centerpoint{x},x)-\delta\left(x,r^\alpha(x)\right)\\
&\geq&\overline{\rho}(T)-(1-\alpha)\delta(\centerpoint{x},x)-\delta\left(x,r^\alpha(x)\right).
\end{eqnarray*}
Since the space $\sM$ is compact, the family
of functions $(i(y_\alpha))_{0<\alpha<1}$ admits a limit point $h\in\sM$ as $\alpha$ tends to $1$. Passing to the limit in the previous
inequality, and using the additional assumption in Definition \ref{defistarshaped}, we deduce that $-h(x)+h(T(x))\geq \overline{\rho}(T)$.

Assume now by contradiction that $\delta$ is bounded from below, that $\overline{\rho}(T)>0$,
and that $h=i(z)$ for some $z\in X$. Then, we deduce from $h(T(x))\geq
h(x)+\overline{\rho}(T)$ and $h(y)=-\delta(y,z)+\delta(b,z)$ that
\[
-\delta(T^k(x),z)+\delta(x,z)\geq k\overline{\rho}(T) \enspace .
\]
The right-hand side of this expression tends to $+\infty$ as $k$ tends to infinity, but the left-hand side of this expression is bounded above independently of $k$, since $\delta(T^k(x),z)$ is bounded from below. This is impossible, hence, $h\in \sH$
is a horofunction.
(More generally, the same argument shows that for any net of functions
$(i(x_\beta))_{\beta \in B}$ converging to $h$, the net $(x_\beta)_{\beta\in B}$
cannot have a bounded subnet.)
\end{proof}

We obtain as immediate consequences the following corollaries:
\begin{corollary}\label{cor-pumping}
Let $X,\delta$  and $T$ be as in Theorem~\ref{maintheorem}. Then,
there exists a Martin function $h\in\sM$ such that for all $x\in X$,
\begin{align}\label{eq-pumping}
h(T^k(x))\geq h(x)+k \overline{\rho}(T) \enspace .
\end{align}
Moreover, if $\delta$ is bounded from below and if $\overline{\rho}(T)>0$, then $h$ is necessarily a horofunction.
\end{corollary}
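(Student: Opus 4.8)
The plan is to observe that the single Martin function $h$ produced by Theorem~\ref{maintheorem} already serves for every iterate simultaneously, so that no new compactness or limiting argument is required. The essential point is that the one-step inequality $h(T(y))\geq h(y)+\overline{\rho}(T)$ supplied by that theorem holds for \emph{all} $y\in X$ with the \emph{same} $h$; this uniformity is exactly what permits a telescoping argument.

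First I would fix $h\in\sM$ as given by Theorem~\ref{maintheorem}, so that $h(T(y))\geq h(y)+\overline{\rho}(T)$ holds for every $y\in X$. Then, given $x\in X$ and $k\geq 1$, I would apply this inequality successively to the points $y=x,T(x),\dots,T^{k-1}(x)$, which yields
\[
h(T^{j+1}(x))\geq h(T^j(x))+\overline{\rho}(T),\qquad 0\leq j\leq k-1 .
\]
Summing these $k$ inequalities, the intermediate terms cancel and one is left with $h(T^k(x))\geq h(x)+k\overline{\rho}(T)$, which is precisely~\eqref{eq-pumping}. Formally this is a one-line induction on $k$, the base case $k=1$ being the conclusion of Theorem~\ref{maintheorem} itself.

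For the final assertion I would simply remark that the function $h$ used here is the very function furnished by Theorem~\ref{maintheorem}; hence the hypotheses that $\delta$ be bounded from below and that $\overline{\rho}(T)>0$ already guarantee, by that theorem, that $h\in\sH$ is a horofunction. Nothing further needs to be verified.

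There is essentially no obstacle to overcome: the derivation is a telescoping sum. The only conceptual point worth emphasising is that $h$ depends on neither $x$ nor $k$. This is the feature that distinguishes~\eqref{eq-pumping} from the Denjoy--Wolff type statements of Karlsson, in which the horofunction is allowed to depend on the chosen orbit, and it is exactly this uniformity that will make the corollary strong enough to recover the Kohlberg--Neyman theorem.
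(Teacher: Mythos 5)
Your proof is correct and is precisely the argument the paper has in mind: the paper states this corollary as an ``immediate consequence'' of Theorem~\ref{maintheorem}, the implicit reasoning being exactly your telescoping of the one-step inequality along the orbit, which works because the same $h$ serves for all points of $X$. Your handling of the horofunction claim---noting that $h$ is the very function from the theorem, so its conclusion transfers verbatim---is likewise the intended one.
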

\begin{corollary}\label{coregalite}
If the assumptions of Theorem \ref{maintheorem} are satisfied, then $\overline{\rho}(T)=\rho(T)$.
\end{corollary}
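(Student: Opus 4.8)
The plan is to obtain the reverse of the inequality in Lemma~\ref{inegalitefacile}, namely $\overline{\rho}(T)\leq\rho(T)$, since combining the two immediately yields equality. The natural tool is Corollary~\ref{cor-pumping}, which already furnishes a Martin function $h\in\sM$ satisfying $h(T^k(x))\geq h(x)+k\overline{\rho}(T)$ for all $x\in X$ and all $k$. The idea is to evaluate this inequality at the basepoint and to exploit the two elementary bounds on Martin functions that come for free from their construction as pointwise limits of the functions $i(x)$.

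First I would record the two facts I need. Since $[i(x)](\basepoint{x})=\delta(\basepoint{x},x)-\delta(\basepoint{x},x)=0$ for every $x$, passing to pointwise limits gives $h(\basepoint{x})=0$ for every Martin function $h$. Likewise, the triangular inequality gives $[i(x)](y)\leq\delta(\basepoint{x},y)$, as already noted in~\S\ref{subsec-horo}, and this upper bound survives in the limit, so $h(y)\leq\delta(\basepoint{x},y)$ for all $y\in X$. These are exactly the ingredients that let me turn the abstract pumping inequality back into a statement about the hemi-metric $\delta$.

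Then I would apply Corollary~\ref{cor-pumping} at $x=\basepoint{x}$, which reads $h(T^k(\basepoint{x}))\geq h(\basepoint{x})+k\overline{\rho}(T)=k\overline{\rho}(T)$. Bounding the left-hand side above by $h(T^k(\basepoint{x}))\leq\delta(\basepoint{x},T^k(\basepoint{x}))$ yields
\[
\delta(\basepoint{x},T^k(\basepoint{x}))\geq k\,\overline{\rho}(T)\enspace .
\]
Dividing by $k$ and letting $k\to\infty$, the left-hand side converges to $\rho(T)$ by definition~\eqref{deftimecycle} (the limit being independent of the chosen point), so $\rho(T)\geq\overline{\rho}(T)$. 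Together with Lemma~\ref{inegalitefacile} this gives $\rho(T)=\overline{\rho}(T)$, as claimed. I do not expect any serious obstacle here: the result is essentially a reformulation of Corollary~\ref{cor-pumping}, and the only point requiring a little care is justifying that the bounds $h(\basepoint{x})=0$ and $h(y)\leq\delta(\basepoint{x},y)$ are inherited by all of $\sM$, which follows because both are closed conditions under pointwise convergence.
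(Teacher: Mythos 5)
Your proof is correct and follows essentially the same route as the paper: both combine the pumping inequality of Corollary~\ref{cor-pumping} with the fact that a Martin function grows no faster than the hemi-metric, then divide by $k$ and invoke Lemma~\ref{inegalitefacile}. The only cosmetic difference is that you specialise to the basepoint $\basepoint{x}$ and use the bounds $h(\basepoint{x})=0$ and $h(y)\leq\delta(\basepoint{x},y)$, whereas the paper applies the same Lipschitz-type bound $h(T^k(x))-h(x)\leq\delta(x,T^k(x))$ at an arbitrary point $x$.
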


\begin{proof}
We deduce from~\eqref{eq-pumping} that
\begin{eqnarray*}
\overline{\rho}(T)&\leq& \frac{1}{k} \left(h(T^k(x))- h(x)\right)
\leq\frac{\delta(x,T^k(x))}{k}
\end{eqnarray*}
which yields $\overline{\rho}(T)\leq \rho(T)$. Then, the result follows by Lemma \ref{inegalitefacile}.
\end{proof}
Theorem~\ref{th-minimax} follows readily by combining Theorem~\ref{maintheorem} and Corollary~\ref{coregalite}.
\subsection{The Kohlberg-Neyman theorem revisited}
We now apply Theorem~\ref{maintheorem} to non-expansive self-maps of a Banach space $X$, and more generally, to those maps that are non-expansive in a {\em compatible hemi-norm}, as defined in Section~\ref{subsec-hemi}.
 \begin{theorem}\label{seminormtheorem}
 Let $(X,\|\cdot\|)$ be a Banach space, and let
 \[p(z)=\sup_{\phi \in E} \phi(z)\]
be a compatible hemi-norm, where $E$ is a bounded subset of
the dual space $X^{\star}$.
Let $T$ be non-expansive for $\delta$. Then, for every $x\in X$, there exists a linear form $\phi$ in the weak-star closure of $E$ such that for every $k\in \mathbb{N}$,
 \[\phi(T^k(x))\geq \phi(x)+k \overline{\rho}(T).
 \]
 \end{theorem}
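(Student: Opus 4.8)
The plan is to apply Corollary~\ref{cor-pumping} to the compatible hemi-norm $p$, extracting a Martin function $h$ satisfying $h(T^k(x))\geq h(x)+k\overline{\rho}(T)$, and then to identify what such a Martin function looks like when the hemi-metric comes from a hemi-norm. The key observation should be that Martin functions are built as pointwise limits of the maps $i(x)$, and for a hemi-norm these maps are closely tied to linear forms in $E$. So the heart of the argument is a structural description: every Martin function of $(X,\delta)$, with $\delta(x,y)=p(y-x)$, must "look affine" in a way that lets us read off a single linear form $\phi$ in the weak-star closure of $E$.

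First I would unwind the definition of $i$ for this hemi-metric. Fixing the basepoint $\basepoint{x}$, we have
\[
[i(z)](y)=\delta(\basepoint{x},z)-\delta(y,z)=p(z-\basepoint{x})-p(z-y).
\]
Using $p(w)=\sup_{\phi\in E}\phi(w)$ and the fact that each $\phi$ is linear, I expect that for a fixed $z$ the supremum defining $p(z-y)$ is attained (or approached) along some $\phi\in E$, and that in the limit defining a horofunction this selection stabilises to a single functional. Concretely, I would argue that along the net (or sequence, invoking Remark~\ref{remark-topo-martin} for metrisability when available, or a net argument in general) realising $h$ as a limit of $i(z_\beta)$, one can pass to a subnet so that the corresponding near-optimal functionals $\phi_\beta\in E$ converge weak-star to some $\phi$ in the weak-star closure of $E$; here boundedness of $E$ together with Banach-Alaoglu guarantees a convergent subnet. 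The candidate identity to establish is
\[
h(y)=\phi(\basepoint{x}-y)+c
\]
for a constant $c$, i.e. $h$ is affine with linear part $-\phi$.

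Once this affine description is in hand, substituting into the inequality from Corollary~\ref{cor-pumping} gives
\[
\phi(\basepoint{x}-T^k(x))+c \;=\; h(T^k(x)) \;\geq\; h(x)+k\overline{\rho}(T) \;=\; \phi(\basepoint{x}-x)+c+k\overline{\rho}(T),
\]
and the constants and basepoint terms cancel to leave precisely $\phi(T^k(x))\geq \phi(x)+k\overline{\rho}(T)$, which is the claim. The dependence on $x$ in the statement is harmless: the construction of $h$ (and hence of $\phi$) is allowed to depend on $x$, and indeed $x$ enters only through the final inequality.

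The main obstacle I anticipate is the affine identification of $h$: the supremum $\sup_{\phi\in E}\phi(z-y)$ need not be attained by one fixed $\phi$ for all $y$, so a naive pointwise limit could a priori yield a function that is only concave rather than genuinely affine. The careful point is to show that in the horofunction limit the active functional does become constant in $y$. I expect this to follow from a two-sided estimate: the limit $h$ satisfies, for each $y$, $h(y)\leq \phi(\basepoint{x}-y)+c$ for any weak-star limit $\phi$ of the optimisers, together with a matching lower bound obtained by testing against the same $\phi$. Making the interchange of the supremum with the limit rigorous, and ensuring the selected $\phi$ lies in the weak-star closure of $E$ rather than merely in its closed convex hull, is the delicate step; it is exactly here that compatibility of the hemi-norm and the boundedness of $E$ are used.
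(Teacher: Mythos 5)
Your overall machinery (invoke Corollary~\ref{cor-pumping}, select near-optimal functionals along the net realising $h$, extract a weak-star limit in $\closure{E}$ using boundedness of $E$) is the right one, and it is close to what the paper does; but the structural claim you organise the proof around is false, and there is a sign error that would reverse the conclusion even if it were true. First the sign: unwinding $[i(z)](y)=p(z-\basepoint{x})-p(z-y)$ with an ``active'' functional $\phi$ gives $p(z-\basepoint{x})-\phi(z)+\phi(y)$, so the linear part of any affine limit is $+\phi(y)$, not $\phi(\basepoint{x}-y)$; with your candidate identity $h(y)=\phi(\basepoint{x}-y)+c$, substituting into $h(T^k(x))\geq h(x)+k\overline{\rho}(T)$ yields $\phi(x)\geq \phi(T^k(x))+k\overline{\rho}(T)$, the opposite of the claim, so the ``constants cancel'' step is wrong as written. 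Second, and more seriously, the affine identification cannot be established because Martin functions of a (hemi-)normed space are genuinely not affine. The Martin space contains $i(X)$ itself, and each $i(z)$ is concave but not affine; since the $h$ of Corollary~\ref{cor-pumping} is only guaranteed to be a horofunction when $\delta$ is bounded below and $\overline{\rho}(T)>0$, it may well be such an internal point. Even horofunctions fail to be affine: in $\RR^2$ with the sup norm and basepoint $0$, taking $z_m=(m,m)$ gives $i(z_m)(y)=m-\max(m-y_1,m-y_2)\to \min(y_1,y_2)$, a non-affine horofunction for which no single active functional exists. So the ``matching lower bound'' you hope for in your last paragraph does not exist.

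The fix --- and the paper's actual argument --- is to abandon affinity altogether: one only needs a one-sided supporting-functional (super-gradient) inequality at the single point $x$ fixed in the statement. For each $y_d$ in the net with $i(y_d)\to h$, choose $\phi_d\in\closure{E}$ attaining $p(y_d-x)$ (attainment at the point $x$, not at varying $y$; this is where weak-star compactness of $\closure{E}$ enters). Then for all $z$,
\[
i(y_d)(z)-i(y_d)(x)=\phi_d(y_d-x)-p(y_d-z)\leq \phi_d(y_d-x)-\phi_d(y_d-z)=\phi_d(z-x)\enspace,
\]
and passing to a weak-star convergent subnet $\phi_d\to\phi\in\closure{E}$ gives $h(z)-h(x)\leq \phi(z-x)$ for all $z$. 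Combining this upper bound at $z=T^k(x)$ with the lower bound $h(T^k(x))-h(x)\geq k\overline{\rho}(T)$ from Corollary~\ref{cor-pumping} yields $\phi(T^k(x)-x)\geq k\overline{\rho}(T)$, as required. This also explains why $\phi$ may depend on $x$ in the statement while $h$ does not: the super-gradient is anchored at $x$. Your affine route, were it correct, would produce a single $\phi$ valid for every $x$ simultaneously --- a strictly stronger statement than the theorem, which is a further sign that it cannot hold in a general Banach space.
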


 \begin{proof}
Let $\closure{E}$ denote the closure of $E$ in the weak-star topology.
Observe that, for all $u\in X$,
\[
p(u)=\sup_{\phi\in \closure{E}}\phi(u) \enspace.
\]
Since $E$ is bounded, $\closure{E}$ is weak-star compact,
and so, the map $\phi\mapsto \phi(u)$
attains its maximum on $\closure{E}$.

Fix now $x$ and $y$ in $X$. It follows from the previous discussion that
for all $z\in X$,
\[
i(y)(z)-i(y)(x)=p(y-x)-p(y-z)=\phi(y-x)-p(y-z)
\]
for some $\phi \in \closure{E}$,  independent of $z$, and so
\[
i(y)(z)-i(y)(x)\leq\phi(y-x)-\phi(y-z)=\phi(z-x).
\]
In other words, $\phi$ is in the super-differential of $i(y)$ at point $x$.

Let $h$ be the Martin function that appears in Corollary \ref{cor-pumping}; $h=\lim_D i(y_d)$ for some net $(y_d)_{d\in D}$. By the previous observation, for every $d\in D$ we can find an element $\phi_d\in \closure{E}$ such that for every $k\in \mathbb{N}$,
\[
i(y_d)(T^k(x))-i(y_d)(x)\leq \phi_d(T^k(x)-x).
\]
Taking the limit along some subnet, we can find a linear form $\phi$ in
$\closure{E}$ such that for every $k\in \mathbb{N}$,
\[
h(T^k(x))-h(x)\leq \phi(T^k(x)-x).
\]
Since by Corollary \ref{cor-pumping} $h(T^k(x))-h(x)\geq k \overline{\rho}(T)$, the theorem is established.
 \end{proof}

We now recover the following result.

 \begin{corollary}[Kohlberg and Neyman~{\cite{KoNe,neymansurv}}]\label{linear}
Let $(X,\|\cdot\|)$ be a Banach space, let $T:X\rightarrow X$ be non-expansive
and assume that $\overline{\rho}(T)>0$.
Then for any $x\in X$, there exists a continuous linear form $\phi$ of norm one, such that
\begin{align}
\phi(T^k(x))\geq \phi(x)+k\overline{\rho}(T)\label{e-kn}
\end{align}
for all $k\in\mathbb{N}$. Moreover, $\phi$ can be taken in the weak-star closure of the set of extreme points of the dual unit ball.
\end{corollary}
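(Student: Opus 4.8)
The plan is to derive Corollary~\ref{linear} as a direct specialisation of Theorem~\ref{seminormtheorem}, by choosing the appropriate compatible hemi-norm. The key observation is that the norm $\|\cdot\|$ of a Banach space is itself a compatible hemi-norm, with $p(z)=\|z\|=\sup_{\phi\in B^\star}\phi(z)$, as noted in the example following Definition~\ref{def-heminorm}. Here the hemi-metric $\delta(x,y)=\|y-x\|$ coincides with the norm-induced metric $d$, and a map is non-expansive for $\delta$ if and only if it is non-expansive in the usual sense. So the standing hypotheses of Theorem~\ref{seminormtheorem} are met verbatim.

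First I would fix $x\in X$ and invoke Theorem~\ref{seminormtheorem} with this particular $p$ and with $E=B^\star$, the dual unit ball. This produces a linear form $\phi$ in the weak-star closure of $E$ satisfying $\phi(T^k(x))\geq\phi(x)+k\overline{\rho}(T)$ for all $k\in\mathbb{N}$. Since $B^\star$ is already weak-star closed (indeed weak-star compact, by Banach-Alaoglu), we have $\phi\in B^\star$, i.e.\ $\|\phi\|\leq 1$. This gives~\eqref{e-kn} immediately, modulo the claim that $\phi$ has norm \emph{exactly} one.

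The remaining work is to upgrade $\|\phi\|\leq 1$ to $\|\phi\|=1$, and this is where the assumption $\overline{\rho}(T)>0$ enters. The plan is to argue that if $\phi$ were zero, or more generally had norm strictly less than what is forced, the inequality would be violated. Concretely, evaluating at $k$ and using that $\phi$ is $1$-Lipschitz gives
\[
k\overline{\rho}(T)\leq\phi(T^k(x))-\phi(x)=\phi(T^k(x)-x)\leq\|\phi\|\,\|T^k(x)-x\|\leq\|\phi\|\cdot k\,\|x-T(x)\|,
\]
where the last step uses subadditivity of $\|T^k(x)-x\|$ along the orbit (non-expansiveness). Dividing by $k$ and letting $k\to\infty$, or simply using $k=1$ together with $\overline{\rho}(T)>0$, forces $\|\phi\|>0$; a cleaner route is to normalise. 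Since $\overline{\rho}(T)>0$ we have $\phi\neq 0$, so $\psi:=\phi/\|\phi\|$ is a well-defined norm-one form, and because $\|\phi\|\leq 1$ we get $\psi(T^k(x))-\psi(x)\geq(\phi(T^k(x))-\phi(x))\geq k\overline{\rho}(T)$, preserving~\eqref{e-kn} with a norm-one form. I expect the slightly delicate point to be confirming one can keep $\phi$ of norm exactly one while retaining the inequality with the \emph{same} constant $\overline{\rho}(T)$; the renormalisation argument above handles this precisely because $\|\phi\|\leq 1$ makes the division by $\|\phi\|$ only increase the left-hand side.

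Finally, for the last sentence of the statement, I would re-run Theorem~\ref{seminormtheorem} with $E$ taken to be the set of extreme points of $B^\star$ rather than $B^\star$ itself. The example following Definition~\ref{def-heminorm} records, via the Bauer maximum principle, that $p(z)=\|z\|=\sup_{\phi\in\Extr B^\star}\phi(z)$, so this $E$ is again a compatible hemi-norm representation. Theorem~\ref{seminormtheorem} then yields $\phi$ in the weak-star closure of $\Extr B^\star$, and the same normalisation argument gives the norm-one conclusion, establishing the refinement.
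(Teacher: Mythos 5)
Your first step is fine: applying Theorem~\ref{seminormtheorem} with $E=B^\star$ and then renormalising does produce a norm-one form satisfying \eqref{e-kn}, so the first assertion of the corollary is correctly handled (dividing the non-negative quantity $\phi(T^k(x))-\phi(x)$ by $\|\phi\|_\star\leq 1$ only helps). The genuine gap is in your last paragraph, where you claim that ``the same normalisation argument'' establishes the refinement that $\phi$ can be taken in the weak-star closure of $\Extr B^\star$. It cannot: that closure is not invariant under rescaling, so $\psi=\phi/\|\phi\|_\star$ may leave it. Worse, the closure can genuinely contain points of norm strictly less than one --- for $X=c_0$, the extreme points of the unit ball of $X^\star=\ell^1$ are $\{\pm e_n\}$, and $0$ lies in their weak-star closure --- so you have no right to assume that the $\phi$ produced by Theorem~\ref{seminormtheorem} with $E=\Extr B^\star$ has norm one, and after renormalising you lose all control on membership in the closure. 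Since the ``moreover'' clause asserts that the \emph{same} form $\phi$ satisfying \eqref{e-kn} lies in this closure and has norm one, your argument does not prove it.

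The paper closes this gap by never renormalising: it applies Theorem~\ref{seminormtheorem} directly with $E=\Extr B^\star$ and proves that the resulting $\phi$ \emph{already} has norm one, using the identity $\rho(T)=\overline{\rho}(T)$ from Corollary~\ref{coregalite} (i.e.\ the content of the main theorem). Indeed, \eqref{e-kn} gives
\[
\overline{\rho}(T)\;\leq\;\lim_{k\to\infty}k^{-1}\phi\bigl(T^k(x)-x\bigr)\;\leq\;\|\phi\|_\star\,\lim_{k\to\infty}k^{-1}\bigl\|T^k(x)-x\bigr\|\;=\;\|\phi\|_\star\,\rho(T)\;=\;\|\phi\|_\star\,\overline{\rho}(T)\enspace,
\]
and since $\overline{\rho}(T)>0$ this forces $\|\phi\|_\star\geq 1$, hence $\|\phi\|_\star=1$. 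Note that your own inequality chain cannot reach this conclusion, because you bounded $\|T^k(x)-x\|$ by $k\|x-T(x)\|$: that only yields $\overline{\rho}(T)\leq\|\phi\|_\star\|x-T(x)\|$, and $\|x-T(x)\|$ may strictly exceed $\overline{\rho}(T)=\inf_y\|y-T(y)\|$ at the given $x$. The sharp ingredient is $\lim_k k^{-1}\|T^k(x)-x\|=\rho(T)$ combined with $\rho(T)=\overline{\rho}(T)$; once you use it, the renormalisation becomes unnecessary even for the first assertion.
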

\begin{proof}
We apply Theorem \ref{seminormtheorem}, taking $E$ to be the set of extreme points of the dual unit ball. This gives a $\phi$  in the weak-star closure of $E$ satisfying (\ref{e-kn}). In particular $\|\phi\|_\star\leq 1$. Moreover,
$\overline{\rho}(T)\leq \lim_{k\to\infty}k^{-1}\phi(T^k(x)-x)
\leq \|\phi\|_{\star} \rho(T)$,
and since $\rho(T)=\overline{\rho}(T)>0$, $1\leq \|\phi\|_{\star}$,
which shows that $\|\phi\|_{\star}=1$.
\end{proof}

Actually, the assumption that $\overline{\rho}(T)>0$ can be dispensed with in the finite dimension case:

 \begin{corollary}\label{linearfinite}
Let $(X,\|\cdot\|)$ be a finite dimensional Banach space, and let $T:X\rightarrow X$ be non-expansive.
Then for any $x\in X$, there exists a linear form $\phi$ of norm one, in the
closure of the set of extreme points of the unit ball, such that
\begin{align}
\phi(T^k(x))\geq \phi(x)+k\overline{\rho}(T)
\end{align}
for all $k\in\mathbb{N}$.
\end{corollary}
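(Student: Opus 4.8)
The plan is to deduce Corollary~\ref{linearfinite} from Corollary~\ref{linear} by a limiting argument that removes the hypothesis $\overline{\rho}(T)>0$. The obstacle is purely that Corollary~\ref{linear} was proved under the assumption $\overline{\rho}(T)>0$, so I need a device that reduces the general case to that one. First I would perturb $T$: for $t\in(0,1)$ fix a basepoint (say the origin, after translating) and set $T_t(x):=tT(x)$, or more symmetrically consider a contraction towards a fixed reference point. The key observation is that $T_t$ is still non-expansive (it is the composition of the non-expansive $T$ with the $t$-Lipschitz scaling, hence $t$-Lipschitz), and that scaling strictly increases the displacement quantity $\overline\rho$ whenever we arrange things so that $\overline{\rho}(T_t)>0$. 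In finite dimension the unit sphere is compact, which is what will let me extract a convergent subsequence of the associated linear forms.

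The steps, in order, are as follows. First I would verify that for each $t$ close to $1$ the perturbed map $T_t$ satisfies $\overline{\rho}(T_t)>0$, or else treat separately the degenerate case where $\overline\rho(T)=0$ (in which case the inequality $\phi(T^k(x))\geq\phi(x)$ for a norm-one $\phi$ must be produced directly, and here one can use compactness of the set of extreme points to pass to the limit of the forms obtained for $t<1$). Second, for each such $t$ I apply Corollary~\ref{linear} to $T_t$, obtaining a norm-one linear form $\phi_t$, lying in the closure of the set of extreme points of the unit ball, with
\[
\phi_t(T_t^k(x))\geq \phi_t(x)+k\,\overline{\rho}(T_t)
\]
for all $k$. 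Third, since $X$ is finite dimensional, the closed set of extreme points of the unit ball is compact, so as $t\to 1$ along a suitable sequence the forms $\phi_t$ converge to a norm-one form $\phi$ in that same closed set. Fourth, I pass to the limit in the displayed inequality: one checks that $T_t^k(x)\to T^k(x)$ as $t\to 1$ (by continuity and non-expansiveness, for each fixed $k$) and that $\liminf_{t\to 1}\overline{\rho}(T_t)\geq \overline{\rho}(T)$, which yields $\phi(T^k(x))\geq \phi(x)+k\,\overline{\rho}(T)$ for every fixed $k$.

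The main obstacle is controlling the behaviour of $\overline{\rho}(T_t)$ as $t\to 1$: I must ensure both that $\overline{\rho}(T_t)>0$ strictly (so that Corollary~\ref{linear} applies for each $t<1$) and that no displacement is lost in the limit, i.e.\ that $\overline{\rho}(T_t)$ does not drop below $\overline{\rho}(T)$ in the limit. The cleanest route is to show that $x\mapsto \delta(x,T_t(x))$ converges uniformly on a fixed compact set to $x\mapsto \delta(x,T(x))$, using the Lipschitz estimate $\|T_t(x)-T(x)\|=(1-t)\|T(x)\|$ together with the Lipschitz character of the norm; combined with the fact that in finite dimension the infimum defining $\overline\rho$ can be localised to a compact set whenever orbits do not escape too fast, this gives the needed lower semicontinuity $\liminf_{t\to1}\overline{\rho}(T_t)\geq\overline{\rho}(T)$. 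The remaining verifications — non-expansiveness of $T_t$, the compactness extraction, and the termwise passage to the limit for each fixed $k$ — are routine.
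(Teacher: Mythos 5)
Your reduction to Corollary~\ref{linear} breaks down at its very first step: the perturbation you propose destroys the quantity you are trying to preserve. If $T_t(x)=tT(x)$ (or, more generally, $T$ composed with any contraction toward a reference point), then $T_t$ is $t$-Lipschitz with $t<1$ on a complete space, so by the Banach fixed point theorem it has a fixed point, and therefore $\overline{\rho}(T_t)=\inf_y\|T_t(y)-y\|=0$ for \emph{every} $t<1$. Thus Corollary~\ref{linear} never applies to $T_t$, the forms $\phi_t$ you want to extract a limit from are never produced, and the treatment of the degenerate case $\overline{\rho}(T)=0$ becomes circular (it invokes exactly those nonexistent $\phi_t$). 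Your assertion that ``scaling strictly increases the displacement quantity $\overline{\rho}$'' is backwards, and the lower semicontinuity $\liminf_{t\to1}\overline{\rho}(T_t)\geq\overline{\rho}(T)$ that you identify as the main obstacle is in fact \emph{false} whenever $\overline{\rho}(T)>0$: take $X=\RR$ and $T(x)=x+1$, so $\overline{\rho}(T)=1$, while $T_t(x)=t(x+1)$ fixes the point $t/(1-t)$ and has $\overline{\rho}(T_t)=0$. No localisation of the infimum to a compact set can repair this, because the displacement of $T_t$ genuinely collapses at points escaping to infinity as $t\to1$.

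The gap is also unnecessary: the hypothesis $\overline{\rho}(T)>0$ is not needed to produce the inequality, only to normalise $\phi$. Theorem~\ref{seminormtheorem} carries no positivity assumption; in the proof of Corollary~\ref{linear}, positivity is used solely to conclude $\|\phi\|_{\star}\geq 1$ from $\overline{\rho}(T)\leq\|\phi\|_{\star}\rho(T)$. The paper's proof of Corollary~\ref{linearfinite} therefore applies Theorem~\ref{seminormtheorem} directly, with $E$ the set of extreme points of the dual unit ball: this yields $\phi$ in the weak-star closure of $E$ satisfying $\phi(T^k(x))\geq\phi(x)+k\overline{\rho}(T)$ with no hypothesis on $\overline{\rho}(T)$, and in finite dimension the weak-star and norm topologies on $X^{\star}$ coincide and the unit sphere is closed, so the closure of $E$ remains inside the unit sphere and $\|\phi\|_{\star}=1$ comes for free. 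If you want to salvage a proof in your style, this is the observation to make; the perturbation machinery should be discarded.
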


\begin{proof}
The proof is similar to the one of Corollary \ref{linear}. We still take $E$ as the set of extreme points of the dual unit ball, which belongs to the dual unit sphere, and we conclude since the unit sphere of a finite dimensional Banach space is closed.
\end{proof}

\begin{remark}
The case in which
$\rho(T)=0$,  in finite dimension,
has been studied in particular by Lins in~\cite{lins2009}, who proved
that either $T$ has a fixed point or there exists a linear form $\varphi$ of norm $1$ such that $\varphi(T^k(x))\to +\infty$ for all $x\in X$. This result is established by showing first that in the latter case,
there is a horofunction $h$ such that $h(T^k(x))\to+\infty$ as $k$ tends to $\infty$ (modulo a change of sign convention for horofunctions).
 As noted in Remark~3.1 of~\cite{lins2009}, the construction
of the linear form $\varphi$
from the horofunction $h$ relies essentially on a sub-differentiability argument.
An argument of the same nature is used here to derive Corollary~\ref{linearfinite}
from Theorem~\ref{maintheorem}. Note however that
Corollary \ref{linearfinite}
and the existence of the horofunction in~\cite{lins2009} are independent
results (none of them can be recovered from the other one).
\end{remark}

\subsection{Application to Shapley operators}\label{appli-shapley}

In this section $\Omega$ is a compact topological space, and $X=\mathscr{C}(\Omega)$ is the Banach space of continuous functions from $\Omega$ to $\mathbb{R}$, endowed with the sup norm.
We will be interested in functions $T:X\rightarrow X$ that are {\em order-preserving}, meaning that
\[ x\leq y \implies T(x)\leq T(y)
\]
where $\leq$ denotes the canonical partial order on functions.
We shall say that $T$ {\em commutes with the addition of a constant}
if
\[
T(\lambda + x) =\lambda +T(x), \qquad \forall \lambda\in \RR \enspace,
\]
where $\lambda+x$ denotes the function $\omega\mapsto \lambda+x(\omega)$.

As in Example~\ref{exampletop}, we set $t(x):=\max_{\omega\in \Omega} x(\omega)$ and consider
$\delta(x,y)=t(y-x)$. Recall that $\delta$ is a hemi-metric compatible with the sup norm and that $(X,\delta)$ is metrically star-shaped.

It is easy to see that $T$ is order-preserving and commutes with the addition of a constant if and only if
\[
t(T(x)-T(y))\leq t(x-y)
\]
for all $x$ and $y$, meaning that $T$ is non-expansive for $\delta$
(see in particular~\cite{GuKe}, or~\cite{lemmens}).

For this special hemi-metric $\delta$, we denote by $\overline{\rho}_+(T)$ and $\rho_+(T)$ the quantities defined in (\ref{defrhobar}) and (\ref{deftimecycle}).
Theorem~\ref{maintheorem} and Corollary \ref{coregalite} imply that

\[
\rho_+(T)=\lim_{k\to +\infty}\max_{\omega\in \Omega}\frac {T^k(x)(\omega)-x(\omega)}{k}=\overline{\rho}_+(T)=\inf_{y\in X} \max_{\omega\in \Omega} (T(y)(\omega)-y(\omega))
\]
for all $x$ in $X$.

We next derive from Theorem \ref{seminormtheorem} the following
generalisation of a result established in \cite{GG04} when $\Omega$ is finite.

 \begin{corollary}\label{linearcompact}
Let $\Omega$ be a compact topological space, and let $X=\mathscr{C}(\Omega)$ be the Banach space of continuous functions from $\Omega$ to $\mathbb{R}$, endowed with the sup norm. Suppose that $T:X\rightarrow X$ is order-preserving and commutes with the addition of a constant.
Then for every $x\in X$, there exists $\omega_+\in \Omega$ such that,
\begin{align}
T^k(x)(\omega_+)\geq x(\omega_+)+k\overline{\rho}_+(T)
\end{align}
for all $k\in\mathbb{N}$.
\end{corollary}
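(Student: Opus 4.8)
The plan is to derive Corollary~\ref{linearcompact} directly from Theorem~\ref{seminormtheorem} by specialising the compatible hemi-norm to the ``top'' operator studied in Example~\ref{exampletop}. Recall from that example that $\delta(x,y)=t(y-x)$ where $t(z)=\max_{\omega\in\Omega}z(\omega)$, and that this hemi-norm is represented as $p(z)=\sup_{\phi\in E}\phi(z)$ with $E=\{e_\omega\mid\omega\in\Omega\}$ the set of evaluation functionals $e_\omega(z)=z(\omega)$. Moreover, as noted in the paragraph preceding the statement, the hypothesis that $T$ is order-preserving and commutes with the addition of a constant is exactly equivalent to $T$ being non-expansive for this $\delta$. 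Hence all the hypotheses of Theorem~\ref{seminormtheorem} are satisfied, and I may apply it with this particular $E$.

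First I would invoke Theorem~\ref{seminormtheorem} with the above choice: for the given $x\in X$, it produces a linear form $\phi$ in the weak-star closure $\closure{E}$ of $E$ such that $\phi(T^k(x))\geq\phi(x)+k\overline{\rho}_+(T)$ for every $k\in\mathbb{N}$. The entire content of the corollary then reduces to identifying this $\phi$ as an evaluation functional $e_{\omega_+}$ for some $\omega_+\in\Omega$. So the crux is to show that the weak-star closure of $E=\{e_\omega\mid\omega\in\Omega\}$ consists again of evaluation functionals, i.e.\ that $\closure{E}=E$ (up to this identification).

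The key step, which I expect to be the main obstacle, is precisely this closedness of $E=\{e_\omega\}$ in the weak-star topology. The natural argument is to use the compactness of $\Omega$: the map $\omega\mapsto e_\omega$ is continuous from $\Omega$ (with its given topology) into $X^\star$ equipped with the weak-star topology, since for each fixed $z\in\mathscr{C}(\Omega)$ the evaluation $\omega\mapsto e_\omega(z)=z(\omega)$ is continuous by definition of $z\in\mathscr{C}(\Omega)$. As the continuous image of the compact space $\Omega$, the set $E$ is therefore weak-star compact, hence weak-star closed. Consequently $\closure{E}=E$, so the form $\phi$ supplied by Theorem~\ref{seminormtheorem} equals $e_{\omega_+}$ for some $\omega_+\in\Omega$; one subtlety to address here is that the map $\omega\mapsto e_\omega$ need not be injective, but this is irrelevant since I only need surjectivity onto the closure.

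Finally I would translate the conclusion back: $\phi=e_{\omega_+}$ means $\phi(u)=u(\omega_+)$ for all $u\in X$, so the inequality $\phi(T^k(x))\geq\phi(x)+k\overline{\rho}_+(T)$ reads $T^k(x)(\omega_+)\geq x(\omega_+)+k\overline{\rho}_+(T)$, which is exactly the assertion of the corollary. A short remark could record that the point $\omega_+$ may depend on the starting point $x$, consistently with the ``for every $x\in X$'' quantifier in the statement.
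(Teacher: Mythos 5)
Your proposal is correct and follows exactly the paper's own route: apply Theorem~\ref{seminormtheorem} with $E=\{e_\omega \mid \omega\in\Omega\}$ and use compactness of $\Omega$ to conclude that $E$ is weak-star closed, so the resulting $\phi$ is an evaluation functional $e_{\omega_+}$. Your explicit justification of the weak-star closedness (continuity of $\omega\mapsto e_\omega$ into the weak-star topology, hence $E$ is the continuous image of a compact set) simply fills in a detail the paper leaves implicit.
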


\begin{proof}
We apply Theorem~\ref{seminormtheorem} to the hemi-norm
$p(z)=t(z)$, taking for $E$ the set of evaluation functions $e_\omega$,
$\omega\in \Omega$.
Since $\Omega$ is compact, the set $E$ is weak-star closed,
so that the linear form $\phi$ in Theorem~\ref{seminormtheorem} can be
written as $\phi=e_{\omega_+}$ for some $\omega_+\in E$.
\end{proof}

Order-preserving maps $T$ commuting with the addition of a constant arise as {\em Shapley operators} in game theory. Suppose now that $\Omega$, $A$ and $B$ are three compact metric spaces, that $g$ is a continuous function from $A\times B\times \Omega$ to $\mathbb{R}$, and that $q:A\times B\times \Omega\rightarrow \Delta(\Omega)$ is continuous, where $\Delta(\Omega)$, the set of probabilities on $\Omega$, is endowed with the weak topology. Then the Shapley operator of the two-player, zero-sum stochastic game with state space $\Omega$, action sets $A$ and $B$, payoff $g$, and transition probability $q$, is defined as

\begin{align*}
T(x)(\omega)&=\sup_{\mu\in \Delta(A)} \inf_{\nu\in \Delta(B)} \left\{ \int_{A\times B} \left[g(a,b,\omega)+\int_\Omega q(dw'|a,b,\omega) x(\omega')\right ] \mu(da)\nu(db) \right\}\\
&=\inf_{\nu\in \Delta(B)} \sup_{\mu\in \Delta(A)} \left\{ \int_{A\times B} \left[g(a,b,\omega)+\int_\Omega q(dw'|a,b,\omega) x(\omega')\right ] \mu(da)\nu(db) \right\} \enspace.
\end{align*}
The map $T$ is order-preserving and commutes with the addition of a constant.
It is shown in~\cite{MSZ,Nowak} that it preserves
$\mathscr{C}({\Omega})$. %
Then, $\frac{T^k(0)(\omega)}{k}$ represents the average reward per time unit if both players play optimally in the $k$-stage game and if the starting state is $\omega$. Thus, Corollary \ref{linearcompact} shows that there is an initial state $\omega_+$ which is the best for the maximising player in the long term. In particular $\frac{T^k(0)(\omega_+)}{k}$ converges as $k$ goes to infinity.

\begin{remark}
To every result concerning the hemi-norm $t$ corresponds
a dual result concerning the hemi-norm $b(x):=-t(-x)=\min_{\omega\in \Omega} x(\omega)$.
In particular,
\[
\rho_-(T):=\lim_{k\to +\infty}\min_{\omega\in \Omega}\frac {T^k(x)(\omega)-x(\omega)}{k}=\overline{\rho}_-(T):=\sup_{y\in X} \min_{\omega\in \Omega} (T(y)(\omega)-y(\omega))
\]
Moreover, there exists $\omega_-\in \Omega$ satisfying
\begin{align*}
T^k(x)(\omega_-)\leq x(\omega_-)+k\overline{\rho}_-(T)
\end{align*}
for all $k\in\mathbb{N}$. This is readily deduced by applying Corollary \ref{linearcompact} to the map $x\rightarrow -T(-x)$.
\end{remark}

\subsection{Comparison with a theorem of Karlsson}
Theorem~\ref{maintheorem} should be compared with the following result.
(Recall that a metric space is {\em proper} if
every closed ball is compact.)
\begin{theorem}[Karlsson~{\cite[Th.~3.3]{Ka}}]\label{thmKarlsson}
If $T$ is a non-expansive self-map of a proper metric space $(X,\delta)$,
then, for all $x\in X$, there exists a horofunction $h$ (depending
on $x$), such that
\[
h(T^k(x))\geq h(x)+k{\rho}(T) \enspace,
\]
 holds for all $k \geq 1$.
\end{theorem}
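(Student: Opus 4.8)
The plan is to fix the point $x$, take it as the basepoint $\basepoint{x}=x$, and realise the desired horofunction as a limit of the functions $i(T^{n_j}(x))$ for a well-chosen sequence of times $n_j\to\infty$. Write $a_k:=\delta(x,T^k(x))$; as already observed this sequence is subadditive, so $\rho(T)=\lim_k a_k/k=\inf_k a_k/k$. With the basepoint $x$ one computes $i(T^{n}(x))(x)=0$ and, by non-expansiveness, for $n\geq k$,
\[
i(T^{n}(x))(T^k(x))=\delta(x,T^{n}(x))-\delta(T^k(x),T^{n}(x))\geq a_{n}-a_{n-k}\enspace,
\]
since $\delta(T^k(x),T^{n}(x))=\delta(T^k(x),T^k(T^{n-k}(x)))\leq\delta(x,T^{n-k}(x))=a_{n-k}$. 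Thus any limit $h$ of a subsequence of $(i(T^{n_j}(x)))_j$ will satisfy $h(x)=0$ and $h(T^k(x))\geq\liminf_j(a_{n_j}-a_{n_j-k})$, and the whole problem reduces to choosing the times $n_j$ so that these backward increments are at least $(\rho(T)-o(1))\,k$ for every fixed $k$.

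The combinatorial heart is a record-time argument. For $\epsilon>0$ set $b^\epsilon_m:=a_m-(\rho(T)-\epsilon)m$; since $a_m/m\to\rho(T)$ we have $b^\epsilon_m/m\to\epsilon>0$, so $b^\epsilon_m\to+\infty$ and its running maximum increases to infinity. Hence there are infinitely many indices $n$ at which $b^\epsilon$ attains a running record, i.e.\ $b^\epsilon_n\geq b^\epsilon_m$ for all $0\leq m\leq n$; rewriting, such an $n$ satisfies
\[
a_n-a_{n-l}\geq(\rho(T)-\epsilon)\,l\qquad\text{for all }0\leq l\leq n\enspace.
\]
I would then take $\epsilon_j=1/j$ and choose a record time $n_j\geq j$ for $b^{\epsilon_j}$, so that $a_{n_j}-a_{n_j-k}\geq(\rho(T)-1/j)k$ for every $k\leq n_j$.

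Finally, the functions $i(T^{n_j}(x))$ are $1$-Lipschitz for $d$, so properness together with Ascoli--Arzel\`a yields a subsequence converging, uniformly on compacts, to a limit $h$. Passing to the limit in the two displayed estimates gives $h(x)=0$ and $h(T^k(x))\geq\rho(T)\,k$ for every $k$, which is the asserted inequality. The step I expect to require the most care is the identification of $h$ as a genuine horofunction rather than a point $i(w)$ of $i(X)$: when $\rho(T)>0$ this is immediate, since $h(T^k(x))\geq\rho(T)k\to+\infty$ while every $i(w)$ is bounded above on the orbit (as $i(w)(T^k(x))\leq\delta(x,w)$), exactly the boundedness contradiction used in the proof of Theorem~\ref{maintheorem}; here it also uses that $a_{n_j}\to+\infty$, so the points $T^{n_j}(x)$ escape every ball. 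When $\rho(T)=0$ one must in addition know that the orbit is unbounded to guarantee escape to the boundary. Properness enters precisely at this extraction step, and it is what lets one replace the pointwise closure $\sM$ by the classical horoboundary defined through uniform convergence on compacts.
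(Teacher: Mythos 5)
The paper itself contains no proof of Theorem~\ref{thmKarlsson}: it is quoted from Karlsson~\cite{Ka}, and the surrounding text only describes its proof as ``a general subadditivity argument, which does not require any non-positive curvature condition''. Your proposal is a faithful reconstruction of exactly that argument, and its core is correct: the estimate $i(T^{n}(x))(T^k(x))\geq a_n-a_{n-k}$ from non-expansiveness, the record-time lemma (since $b^\epsilon_m=a_m-(\rho(T)-\epsilon)m\to+\infty$, there are infinitely many $n$ with $a_n-a_{n-l}\geq(\rho(T)-\epsilon)l$ for all $0\leq l\leq n$), the diagonal choice $\epsilon_j=1/j$, $n_j\geq j$, and the extraction of a limit $h$ of $(i(T^{n_j}(x)))_j$ satisfying $h(x)=0$ and $h(T^k(x))\geq k\rho(T)$. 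Note the contrast with the paper's own Theorem~\ref{maintheorem}, proved via fixed points of the contractions $T\circ r^\alpha$: that route needs completeness and the star-shaped hypothesis but yields an $h$ independent of $x$ and the stronger constant $\overline{\rho}(T)$, whereas your (Karlsson-type) route needs no convexity but gives an $x$-dependent $h$ and only $\rho(T)$ --- precisely the trade-off the paper discusses.

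The one genuine gap is the one you flag yourself: when $\rho(T)=0$, your construction only produces a Martin function $h\in\sM$, not necessarily a horofunction $h\in\sH$ as the statement demands (if $x$ is a fixed point of $T$, your sequence is constant and $h=i(x)$). Moreover, your proposed repair --- unboundedness of the orbit --- is not sufficient in a general proper metric space: without a geodesic hypothesis, a sequence $(z_j)$ leaving every ball can still satisfy $i(z_j)\to i(w)$ pointwise for some $w\in X$. For instance, take the countable proper metric space $\{\basepoint{x},w,z_1,z_2,\dots\}$ with $\delta(\basepoint{x},w)=2$, $\delta(w,z_j)=j$, $\delta(\basepoint{x},z_j)=2+j$, $\delta(z_i,z_j)=i+j$; then for each fixed $y$ one has $\delta(\basepoint{x},z_j)-\delta(y,z_j)=i(w)(y)$ for all large $j$, so $i(z_j)\to i(w)$ even though the $z_j$ escape every ball. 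The difficulty is in fact intrinsic to the statement as quoted: if $X$ is compact (e.g.\ $T$ the identity map), then $i(X)$ is already closed, $\sH=\emptyset$, and no horofunction exists at all. So your proof does establish the theorem in the regime $\rho(T)>0$ --- where your boundedness contradiction, the same one used at the end of the proof of Theorem~\ref{maintheorem}, is airtight, and which is the only regime in which the paper invokes the result (Example~\ref{examplenonconvexemetrique} has $\rho(T)=1$) --- while the degenerate case $\rho(T)=0$ requires either reading ``horofunction'' as ``Martin function'' or additional hypotheses; that defect belongs to the quoted statement rather than to your argument, but your write-up should say so explicitly instead of suggesting that orbit unboundedness closes it.
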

Note first two differences in the statements of Theorem~\ref{maintheorem} and
of Karlsson's theorem: we require a metric convexity assumption whereas
Karlsson does not; Karlsson makes the general assumption that the metric
space is proper whereas we do not.
Actually, the compactness issue appears to be a mere technicality:
it can be checked that the proof of Theorem~\ref{thmKarlsson}
remains valid even if $X$ is not proper if one defines horofunctions with respect to the topology
of pointwise convergence as we do here (see Remark~\ref{rk-technical}).

A key difference however lies in the conclusions of both results. In Theorem~\ref{maintheorem}, horofunctions are {\em independent} of the choice of $x$, whereas
in Karlsson's result, the horofunction does depend on $x$. Note also
that inequalities involving $\overline{\rho}(T)$ as in Theorem~\ref{maintheorem}
and Corollary~\ref{cor-pumping} are stronger than their counterparts involving
$\rho(T)$ since $\overline{\rho}(T)\geq \rho(T)$.
This is illustrated in Example~\ref{examplenonconvexemetrique} below,
in which we consider
a non-star-shaped space for which Theorem~\ref{thmKarlsson} applies but not Theorem~\ref{maintheorem}. This example shows that the assumptions of Theorem~\ref{maintheorem} cannot be weakened : it is primordial that the
space $X$ is metrically star-shaped.
Hence, Theorem~\ref{maintheorem} and Karlsson's result are
incomparable: the latter holds under more general circumstances,
but the former yields a stronger conclusion.

\begin{example}\label{examplenonconvexemetrique}
Let $\T:=\RR (\text{mod} 1)$ be the torus with its canonical metric
$d_\T$; and let $X=\T\times \RR$, with the metric
\[
d((x,t),(x',t')):=d_\T(x,x')+|t-t'| \enspace .
\]
Let $\alpha\in(0,\frac{1}{2})\setminus \Q$, and
consider the mapping $T:X\to X$,
\[
T(x,t)=(x+\alpha,t+1)
\]
which is non-expansive for the metric $d$. It is straightforward to
check that $\rho(T)=1$ but $\overline{\rho}(T)=1+\alpha$, so
inequation~\eqref{eq-pumping} cannot be satisfied (otherwise one would
have $\rho(T)=\overline{\rho}(T)$ by Corollary \ref{coregalite}).

In fact we prove that it is not even true that there exists a
function $h\in\sM$ such that for all $x$,
\begin{align}
h(T^k(x,t))\geq h(x,t)+k\rho(T),\qquad \forall k\in \NN.
\end{align}

Fix a basepoint $(\basepoint{x},\basepoint{t})$ in $X$. Since the metric on
$X=\T\times \RR$ is the sum of the two metrics $(\T,d_T)$ and
$(\RR,|\cdot|)$, we can write any Martin function $h$ of $X$ as the sum
\[
h(x,t)=h_1(x)+ h_2(t)
\]
where $h_1\in $ is a Martin function of $(\T,d_T)$ and $h_2$ is a
Martin function of $(\RR,|\cdot|)$, relatively to the basepoints
$\basepoint{x}$ and $\basepoint{t}$, respectively (see Prop.~9.1 in \cite{AGW}).
The only Martin functions on $\T$ are
$h_1(x)=-d_{\T}(x,y)+d_{\T}(\basepoint{x},y)$ with $y\in \T$. The
Martin functions of $(\RR,|\cdot|)$ are either of the form
$h_2(t)=-|t-s|+|\basepoint{t}-s|$, with $s\in \RR$, or the affine
functions $h_2(t)=t-\basepoint{t}$ and $h_2=-(t-\basepoint{t})$. We search a
Martin function $h$ such that, for any $(x,t)\in X$,
\begin{align}
h(T^k(x,t))\geq h(x,t)+k,\qquad \forall k\in \NN \label{e-h}
\end{align}
i.e.,
\begin{align}
h_1(x+k\alpha)+h_2(t+k)\geq  h_1(x)+h_2(t)+k, \qquad \forall k\in
\NN. \label{ineq-pump}
\end{align}
Any Martin function $h_1$ is bounded. If $h_2$ is of
the form $h_2(t)=-|t-s|+|\bar{t}-s|$ or 
$h_2(t)=-(t-\bar{t})$, then, for a fixed $(x,t)$,
the left-hand side
of~(3.7) remains bounded from above as $k\to\infty$, 
whereas the right-hand side of~(3.7) tends to infinity. The only possibility
is thus $h_2(t)=t-\bar{t}$.
In that case,~\eqref{ineq-pump} becomes
\[
h_1(x+k\alpha)-h_1(x)\geq 0.
\]
Since the sequence $(x+k\alpha)_{k\in\NN}$ is dense in $\T$, and
since $h_1$ is continuous, we have that $h_1(x)\leq \inf_{y\in
\T}h_1(y)$. This is true for any $x\in \T$, so $h_1$ is constant.
But there are no constant Martin function of $(\T,d_\T)$.

Karlsson's result shows however that for any choice of $(x,t)\in X$,
there exists a horofunction $h$ (depending on $(x,t)$) such
that~\eqref{e-h} is satisfied.
\end{example}

\section{A Denjoy-Wolff type theorem for order-preserving homogeneous self-maps of symmetric cones}
\label{sec-cones}
\subsection{The reverse Funk metric}\label{sec-funk}
In this section, we recall or establish preliminary results concerning cones.
We consider a (closed, convex and pointed) cone $C$ in $\RR^n$
with non-empty interior $X:=\Int C$.
We equip $C$ with the partial order defined by $x\leq y$ if $y-x\in C$. For all $x,y\in C\setminus\{0\}$, we set
\[
M(y/x):= \inf\{\lambda>0\mid \lambda x\geq y\} , \qquad
m(y/x):= \sup\{\lambda>0\mid \lambda x\leq y\}
\enspace .
\]
Since $C$ is closed, as soon as the two latter sets are non-empty,
their respective infimum and supremum are attained;
in particular, $M(y/x)$ and $m(y/x)$ are finite and positive.

The (reverse) {\em Funk hemi-metric} on the interior of $C$
is defined by
\[\Funk(x,y):=\log M(y/x) \enspace.
\]
More generally, we shall use the notation $\Funk(x,y)$ as soon
as $x,y\in C\setminus\{0\}$ are such that $\mu x\geq y$
for some $\mu>0$. The terminology ``Funk metric''
is borrowed from~\cite{papadopoulos,walsh}; it refers
to~\cite{funk}.
The map $\delta:=\Funk$ is easily checked to be a hemi-metric on $\Int C$.
Indeed:
$\delta(x,y)$ is finite since $x$ is in the interior
of $C$;
the triangular inequality and the fact that $\delta(x,x)=0$ for any $x\in X$
are easy to verify since $C$ is pointed;
the fact that $\delta(y,x)=\delta(y,x)=0\Longrightarrow x=y$
follows from the fact that $C$ is closed and pointed.
The term ``reverse'' is by opposition to the hemi-metric
$(x,y)\mapsto \delta(y,x)$ which has a different horoboundary~\cite{walsh}.

The quantities $M(y/x)$ and $m(y/x)$ can be expressed in terms
of the extreme rays of the {\em dual cone} $C^\star$. The latter
is the set of continuous linear forms taking non-negative values on $C$.
For each cone $K$, $\Extr K$ denotes
the set of {\em representatives} of the extreme rays of $K$
(i.e., the non-zero vectors belonging to these extreme rays).
\begin{lem}\label{lem-funk}
Let $x,y,z\in C\setminus\{0\}$ be such that $\mu x\geq y$ and $z\geq \nu x$
for some $\mu,\nu>0$. Then,
\begin{align}
M(y/x)&= \sup_{\varphi\in C^\star\!,\; \varphi(x)\neq 0} \frac{\varphi(y)}{\varphi(x)}
= \sup_{\varphi\in \Extr C^\star\!,\;  \varphi(x)\neq 0} \frac{\varphi(y)}{\varphi(x)}\enspace,\label{e-def-funk}\\
m(z/x)&= \inf_{\varphi\in C^\star\!,\;  \varphi(x)\neq 0} \frac{\varphi(z)}{\varphi(x)}
= \inf_{\varphi\in \Extr C^\star\!,\; \varphi(x)\neq 0} \frac{\varphi(z)}{\varphi(x)}\enspace.\label{e-def-funkdual}
\end{align}
Moreover, when $x\in \Int C$, the latter suprema are attained,
and the condition that $\varphi(x)\neq 0$ can be replaced by $\varphi\in C^\star\setminus\{0\}$. When $z\in \Int C$, the latter infima are attained.
\end{lem}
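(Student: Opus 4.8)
The plan is to prove first the equalities in which $\varphi$ ranges over all of $C^\star$ (the first equality sign in each of \eqref{e-def-funk} and \eqref{e-def-funkdual}), and then to reduce the range to the extreme rays. I treat \eqref{e-def-funk}; the formula \eqref{e-def-funkdual} for $m$ is proved in an analogous way, exchanging $\geq$ with $\leq$, $\sup$ with $\inf$, and the hypothesis $\mu x\geq y$ with $z\geq\nu x$. One inequality is immediate: if $\lambda x\geq y$, i.e. $\lambda x-y\in C$, then every $\varphi\in C^\star$ satisfies $\lambda\varphi(x)\geq\varphi(y)$, so $\lambda\geq\varphi(y)/\varphi(x)$ whenever $\varphi(x)\neq 0$; taking the infimum over the admissible $\lambda$ (whose infimum is $M(y/x)$) and the supremum over $\varphi$ gives $M(y/x)\geq\sup_{\varphi\in C^\star,\,\varphi(x)\neq 0}\varphi(y)/\varphi(x)$.

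For the reverse inequality I would set $\lambda_0$ equal to this supremum and show that $\lambda_0 x-y\in C$, whence $M(y/x)\leq\lambda_0$. By the bipolar theorem ($C=C^{\star\star}$, valid since $C$ is a closed convex cone), if $\lambda_0 x-y\notin C$ there is a separating $\varphi\in C^\star$ with $\varphi(\lambda_0 x-y)<0$. If $\varphi(x)>0$ this reads $\varphi(y)/\varphi(x)>\lambda_0$, contradicting the definition of $\lambda_0$; the remaining case $\varphi(x)=0$ is excluded \emph{precisely} by the standing hypothesis $\mu x\geq y$, which forces $\varphi(y)\leq\mu\varphi(x)=0$ and hence, as $\varphi(y)\geq 0$, $\varphi(y)=0$, contradicting $\varphi(\lambda_0 x-y)=-\varphi(y)<0$. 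This is where the domination assumption is genuinely used. (In the dual $m$ computation the same step is even easier, since a separating $\varphi$ with $\varphi(x)=0$ already gives $\varphi(z)<0$, impossible for $z\in C$.)

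Next I would pass to extreme rays. Fix $e\in\Int C$; then $B=\{\varphi\in C^\star:\varphi(e)=1\}$ is a compact convex base of the pointed cone $C^\star$, and its extreme points are exactly the normalised representatives of $\Extr C^\star$. Since $\varphi(y)/\varphi(x)$ is invariant under positive rescaling of $\varphi$, it suffices to take the supremum over $B$. By Minkowski's theorem $B$ is the convex hull of its extreme points, and since $B$ lies in a hyperplane of $X^\star\cong\RR^n$, Carathéodory's theorem writes each $\varphi\in B$ as $\varphi=\sum_{i=1}^{n}\lambda_i\psi_i$ with $\lambda_i\geq 0$, $\sum_i\lambda_i=1$ and $\psi_i$ extreme. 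For the indices with $\psi_i(x)=0$ the domination forces $\psi_i(y)=0$, so those terms drop out of both $\varphi(x)=\sum_i\lambda_i\psi_i(x)$ and $\varphi(y)=\sum_i\lambda_i\psi_i(y)$; writing $R$ for the supremum of $\psi(y)/\psi(x)$ over extreme rays with $\psi(x)\neq 0$, the bound $\psi_i(y)\leq R\,\psi_i(x)$ on the surviving indices gives $\varphi(y)\leq R\,\varphi(x)$, i.e. $\varphi(y)/\varphi(x)\leq R$. This yields the second equality in \eqref{e-def-funk}.

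Finally, for the assertions when $x\in\Int C$: if $\varphi\in C^\star\setminus\{0\}$ and $\varphi(x)=0$ then, $x$ being interior, $x\pm\epsilon v\in C$ for every direction $v$ and small $\epsilon$, forcing $\varphi\equiv 0$; hence $\varphi(x)\neq 0$ may be replaced by $\varphi\neq 0$. Moreover $\{\varphi\in C^\star:\varphi(x)=1\}$ is then compact, so the linear map $\varphi\mapsto\varphi(y)$ attains its maximum on it, at an extreme point by the Bauer maximum principle, which gives both the attainment of the suprema and the fact that the optimum is realised on an extreme ray (for \eqref{e-def-funkdual} one minimises $\varphi\mapsto\varphi(z)$ instead). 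I expect the main obstacle to be the extreme-ray reduction when $x$ lies on the boundary of $C$, where the slice $\{\varphi(x)=1\}$ is no longer compact: the Carathéodory argument on the compact base $B$, combined with the observation that $\mu x\geq y$ annihilates the extreme functionals vanishing at $x$, is what makes this step go through cleanly and uniformly in $x$.
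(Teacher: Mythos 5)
Your proofs of the two equalities in \eqref{e-def-funk} and \eqref{e-def-funkdual}, and of the attainment of the suprema when $x\in\Int C$, are correct and follow essentially the same route as the paper: the first equality rests on the bipolar identity $C=(C^\star)^\star$ (the paper dispatches this in one line; your separation argument, including the observation that the hypothesis $\mu x\geq y$ forces $\varphi(y)=0$ whenever $\varphi(x)=0$, is exactly what makes that line work), and the reduction to extreme rays is carried out on the compact base $B=\{\varphi\in C^\star:\varphi(e)=1\}$, which is the paper's $\Sigma_u$. Your Minkowski--Carath\'eodory decomposition is the paper's ``barycentre of extreme points'', and you are in fact more explicit than the paper about discarding the extreme functionals that vanish at $x$.

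There is, however, one genuine mismatch at the end. The last sentence of the lemma asserts that the infima in \eqref{e-def-funkdual} are attained when $z\in\Int C$, with \emph{no} interiority assumption on $x$. Your argument for the infima (``for \eqref{e-def-funkdual} one minimises $\varphi\mapsto\varphi(z)$ instead'') minimises over the slice $\{\varphi\in C^\star:\varphi(x)=1\}$, which is compact only when $x\in\Int C$; so as written you prove attainment under the hypothesis $x\in\Int C$, not under the stated hypothesis $z\in\Int C$. When $x\in\partial C$ this slice is unbounded, and your argument does not apply --- precisely the obstacle you yourself flag in your closing remark, but which you only resolved for the reduction step, not for the attainment claim. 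The fix is to dualise the normalisation, as the paper does: set $\Sigma_z=\{\varphi\in C^\star:\varphi(z)=1\}$, which is compact and convex because $z\in\Int C$; every $\varphi\in C^\star\setminus\{0\}$ has $\varphi(z)>0$ and so can be scaled into $\Sigma_z$, and on $\Sigma_z$ the ratio $\varphi(z)/\varphi(x)$ equals $1/\varphi(x)$, so minimising it amounts to maximising the linear form $\varphi\mapsto\varphi(x)$ over $\Sigma_z$. That maximum is positive (if $\varphi(x)=0$ for all $\varphi\in C^\star$, then both $x$ and $-x$ would lie in $C^{\star\star}=C$, contradicting pointedness) and is attained at an extreme point of $\Sigma_z$, i.e.\ on an extreme ray of $C^\star$; this yields the attainment of both infima in \eqref{e-def-funkdual} exactly under the hypothesis $z\in\Int C$.
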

This result is somehow standard. We include the proof for the convenience of the reader.
\begin{proof}
Since $C=(C^\star)^\star$, the inequality $\lambda x\geq y$, i.e., $\lambda x-y\in C$, is equivalent to $\varphi(\lambda x -y)\geq 0$, for all $\varphi\in C^\star$, from which the first equality in~\eqref{e-def-funk} follows.

Let now recall that if $u$ is in the interior of $C$, and if $\varphi\in C^\star\setminus \{0\}$, then, $\varphi(u)$ cannot vanish. Indeed, we can find
a ball $B$ centred at $0$, such that $u+ z\in C$ for all $z\in B$.
If $\varphi(u)=0$, then, $0\leq \varphi(u\pm z)= \pm\varphi(z)$ holds
for all $z\in B$, and so $\varphi=0$.

Choose now a vector $u\in \Int C$,
and define $\Sigma_u:= \{\varphi\in C^\star\mid \varphi(u)=1\}$.
Observe that $\Sigma_u$ is compact and convex, and that
each extreme ray of $C$ can be identified to the unique
extreme point of $\Sigma_u$ which generates this ray.
Let $\bar{\lambda}$ denote the value of the last supremum in~\eqref{e-def-funk},
so that the inequality $\bar{\lambda} \varphi(x)\geq \varphi(y)$ holds for all extreme points $\varphi$ of $\Sigma_u$. Since every element $\varphi$
of $\Sigma_u$ is a barycentre of extreme points of $\Sigma_u$,
we deduce that the same inequality holds for all points $\varphi$ of $\Sigma_u$, which shows that $\bar{\lambda}$
coincides with the first supremum in~\eqref{e-def-funk}.

The arguments to establish~\eqref{e-def-funkdual} are dual.

Finally, when $x\in \Int C$, we consider the map
$J:\Sigma_x\to \RR$, $\varphi\mapsto \varphi(y)/\varphi(x)=\varphi(y)$.
Since $J$ is linear, it attains its maximum at an extreme
point of the compact convex set $\Sigma_x$, which
implies that both suprema in~\eqref{e-def-funk} are
attained. Similarly,  when $z\in \Int C$, we note that
every $\varphi\in \Sigma_z$ maximising $\varphi(x)$
attains the first infimum in~\eqref{e-def-funkdual},
and so, the second infimum is attained
at an extreme point of $\Sigma_z$.
\end{proof}

Walsh showed in~\cite{walsh} that the horoboundary of $\Int C$ in the (reverse) Funk hemi-metric coincides with the space of rays contained in the Euclidean boundary of $C$.
\begin{proposition}[{\cite[Prop.~2.5]{walsh}}]
Let $C$ be a closed convex pointed cone of non-empty interior in a finite dimensional Banach space, with basepoint $\basepoint{x}$.
Then, any Martin function $h$ for the (reverse) $\Funk$ hemi-metric corresponds to a vector $u\in C\setminus\{0\}$,
\begin{align}\label{e-martinfunction}
h(x)=-\Funk(x,u)+\Funk(\basepoint{x},u) \enspace,\forall x\in \Int C,
\end{align}
and $h$ is a horofunction if and only if $u\in \partial C\setminus\{0\}$.
\end{proposition}
We shall also be interested in the following variant of the Funk hemi-metric,
considered by Papadopoulos and Troyanov~\cite{papadopoulos}:
\[
\Funk^+(x,y):=\max(\Funk(x,y),0) \enspace.
\]
The map $\Funk^+$ is easily seen to be a hemi-metric.
The following simple application
of the arguments of~\cite{walsh} characterises
the boundary of the hemi-metric $\Funk^+$.
\begin{proposition}\label{prop-walsh-funkp}
Let $C$ be a closed convex pointed cone of non-empty interior in a finite dimensional Banach space.
Then, the Martin space for the $\Funk^+$ hemi-metric consists of the functions~\eqref{e-martinfunction}, with $u\in C\setminus\{0\}$, together with the functions
\begin{align}
h(x)=-\Funk^+(x,u)+\Funk^+(\basepoint{x},u) \enspace,\forall x\in \Int C,
\label{e-martinfunctionsfunkplus}
\end{align}
with $u\in C$, and an element is a horofunction if and only if it is of
the form~\eqref{e-martinfunction}, with $u\in C\setminus\{0\}$, or of the form~\eqref{e-martinfunctionsfunkplus} with $u\in \partial{C}$.
\end{proposition}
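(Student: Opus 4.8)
The plan is to adapt Walsh's original argument, which characterises the Martin space of the reverse Funk hemi-metric $\Funk$, to the truncated hemi-metric $\Funk^+$. The essential observation is that $\Funk^+(x,y)=\max(\Funk(x,y),0)$ differs from $\Funk$ only by the truncation at $0$, so the Martin functions for $\Funk^+$ should be expressible in terms of the same family of vectors $u\in C\setminus\{0\}$ that parametrise the Martin functions for $\Funk$, but with two complications: the truncation may cause some limits to degenerate, and it allows the vector $u$ to range over all of $C$ (including a degenerate contribution, loosely corresponding to $u$ going to infinity or to $0$, which gives the identically-zero-type horofunctions). First I would recall, following the proof of~\cite[Prop.~2.5]{walsh}, that a Martin function is a pointwise limit $h(x)=\lim_\beta\bigl(\Funk^+(\basepoint{x},u_\beta)-\Funk^+(x,u_\beta)\bigr)$ along a net $(u_\beta)$ in $\Int C$, and that, after normalising (using positive homogeneity of $M(\cdot/\cdot)$ in its first argument), one may assume the $u_\beta$ lie on a fixed compact cross-section of the cone and converge to some $u\in C$.

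The key computational step is to analyse, for fixed $x$ and varying $u_\beta\to u$, the behaviour of the difference $\Funk^+(\basepoint{x},u_\beta)-\Funk^+(x,u_\beta)$. Using the representation $M(u/x)=\sup_{\varphi\in\Extr C^\star,\varphi(x)\neq0}\varphi(u)/\varphi(x)$ from Lemma~\ref{lem-funk}, I would track how the two truncated terms behave. There are two regimes. In the regime where $\Funk(\basepoint{x},u)$ and $\Funk(x,u)$ both stay positive in the limit (which happens when $u\in\partial C\setminus\{0\}$ lies on a face seen by both $x$ and $\basepoint{x}$, and also for $u$ in the interior), the truncations are inactive and the limit reproduces exactly a function of the form~\eqref{e-martinfunction}. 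In the complementary regime the truncation becomes active for one or both terms, and one obtains functions of the form~\eqref{e-martinfunctionsfunkplus}; here I would verify directly from the definition that $h(x)=-\Funk^+(x,u)+\Funk^+(\basepoint{x},u)$ is indeed a pointwise limit of $i(u_\beta)$ by taking $u_\beta=u$ constant (or a suitable interior approximation), which shows these functions genuinely lie in the Martin space. Conversely, that every Martin function is of one of the two listed forms follows by compactness of the cross-section together with continuity of $\varphi\mapsto\varphi(u)$ and the fact that $M(u/x)$ depends continuously on $u$ away from the degenerate locus.

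To separate horofunctions from the interior points $i(X)$, I would argue as in Walsh's proof: a function of the form~\eqref{e-martinfunction} with $u\in\Int C$ equals $i(u)$ and is therefore not a horofunction, whereas for $u\in\partial C\setminus\{0\}$ it cannot be written as $i(v)$ for any $v\in\Int C$ because such a function fails to satisfy the local geometric property enjoyed by the genuine embeddings $i(v)$ (concretely, $\Funk(x,u)$ with $u$ on the boundary has a supremum over extreme rays of $C^\star$ that is attained only on a proper sub-face, producing a non-smooth behaviour that the interior functions do not exhibit). For the functions~\eqref{e-martinfunctionsfunkplus}, I would show that the truncation forces $h$ to be constant on a region where $\Funk(x,u)\leq 0$, i.e.\ where $x\geq u$ up to scaling; such plateaux never occur for an embedding $i(v)$ with $v\in\Int C$, so these functions are horofunctions precisely when $u\in\partial C$, while for $u\in\Int C$ the function~\eqref{e-martinfunctionsfunkplus} still sits inside the Martin space but is captured by the interior case. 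The main obstacle I anticipate is the careful bookkeeping at the boundary between the two regimes: one must rule out spurious limits and confirm that the truncation does not produce Martin functions outside the two stated families, which requires a uniform control of the attained suprema in Lemma~\ref{lem-funk} as $u_\beta$ approaches $\partial C$. This uniformity is exactly where the compactness of $\Sigma_u$ and the attainment statements in Lemma~\ref{lem-funk} do the work.
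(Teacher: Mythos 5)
There is a genuine gap, and it sits at the very first step of your plan: the normalisation of the net $(u_\beta)$ onto a compact cross-section of the cone. That reduction is legitimate in Walsh's proof for the reverse Funk hemi-metric, because $M(\lambda u/x)=\lambda M(u/x)$ makes the function $x\mapsto \Funk(\basepoint{x},u)-\Funk(x,u)$ invariant under the scaling $u\mapsto\lambda u$, so rescaling the net changes nothing. For $\Funk^+$ this invariance is destroyed by the truncation: $\Funk^+(x,\lambda u)=\max\bigl(\log\lambda+\Funk(x,u),0\bigr)$, so $i(\lambda u)\neq i(u)$ in general. The whole content of the proposition lies precisely in this failure: the family \eqref{e-martinfunction} arises exactly from nets with $\|u_\beta\|\to\infty$ (then $\Funk(x,u_\beta)\to+\infty$ for each fixed $x$, the truncation is eventually inactive pointwise, and Walsh's theorem for $\Funk$ applies), while the family \eqref{e-martinfunctionsfunkplus} arises from bounded nets (extract $u_\beta\to u\in C$ and use the continuity of $\Funk(x,\cdot)$ in its second argument, established in \cite{walsh}). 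By forcing the $u_\beta$ onto a cross-section you collapse the first regime onto the second: for instance the limit of $i(mu)$, $m\to\infty$, with $u\in\Int C$, is the function \eqref{e-martinfunction}, but after your normalisation this sequence becomes the constant sequence $i(u)$, whose limit is the different function \eqref{e-martinfunctionsfunkplus}. (They genuinely differ: at $x=\mu u$, $\mu>1$, the former equals $\Funk(\basepoint{x},u)+\log\mu$, unbounded in $\mu$, while the latter equals the constant $\Funk^+(\basepoint{x},u)$.) So your ``two regimes'' analysis cannot be carried out with normalised nets; in particular the claim that for $u\in\Int C$ the truncations are inactive is false, since for any fixed finite $u$ the truncation is active at every point $x\geq u$.

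The error propagates to your identification of the horofunctions. For the $\Funk^+$ Martin space, $i(u)$ with $u\in\Int C$ is the truncated function \eqref{e-martinfunctionsfunkplus}, not \eqref{e-martinfunction}; consequently the functions \eqref{e-martinfunction} with $u\in\Int C$ are not of the form $i(v)$ for any $v\in\Int C$, and they \emph{are} horofunctions, exactly as the proposition asserts, contrary to your assertion that such a function ``equals $i(u)$ and is therefore not a horofunction''. The paper's proof avoids all of this: since $\Int C$ is a countable union of compact sets, Martin functions are pointwise limits of sequences $i(z_m)$ with $z_m\in\Int C$ (Remark~\ref{remark-topo-martin}), and one simply dichotomises on whether $\|z_m\|\to\infty$ (giving \eqref{e-martinfunction} via Walsh's result, the truncation being eventually inactive at each fixed $x$) or $(z_m)$ has a bounded, hence convergent, subsequence (giving \eqref{e-martinfunctionsfunkplus} by continuity of $\Funk(x,\cdot)$ on $C$). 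If you drop the normalisation and replace your regime analysis by this bounded/unbounded dichotomy, the remainder of your plan can be salvaged.
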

\begin{proof}
Any function $h$ of the Martin space
is the pointwise limit of a sequence of functions $h_m: x\mapsto -\Funk^+(x,z_m)+\Funk^+(\basepoint{x},z_m)$, where $(z_m)_{m\geq 1}$ is a sequence of elements of $\Int C$.
If $\|z_m\|\to\infty$ as $m$ tends to infinity, then,
for all $x\in \Int C$,
$\Funk(x,z_m)$ also tends to infinity, and so $\Funk^+(x,z_m)=\Funk(x,z_m)$
for $m$ large enough. Then, the result of~\cite{walsh} shows that
$h=\lim_m h_m$ is of the form~\eqref{e-martinfunction}, for some $u\in C\setminus\{0\}$. It remains to examine
the case in which $z_m$ contains a bounded subsequence.
Then, possibly after replacing $z_m$ by a subsequence, we may assume
that $z_m$ converges to a point $u\in C$. Using
the fact established in~\cite{walsh} that for $x\in \Int C$,
$\Funk(x,y)$ is continuous in the second argument $y\in C$,
we deduce that $h_m$ converges to the function~\eqref{e-martinfunctionsfunkplus}. Finally, the characterisation of the horofunctions is straightforward.
\end{proof}

The metric $d_T$ associated to the Funk hemi-metric,
\[ d_T(x,y)=\max(\Funk(x,y),\Funk(y,x))
\]
is called the {\em Thompson metric}.
We recall that for this metric, the map \begin{equation}\label{geodesicThompson}\gamma_y(s)=\frac
{\beta^s-\alpha^s}{\beta\alpha^s-\alpha\beta^s+\beta^s-\alpha^s} \centerpoint{x}+\frac{\beta^s-\alpha^s}{\beta\alpha^s-\alpha\beta^s+\beta^s-\alpha^s} y \end{equation}
\noindent with $\beta=\exp(\Funk(\centerpoint{x},y))$ and $\alpha=\exp(-\Funk(y,\centerpoint{x}))$ is a
(straight line) geodesic joining $\centerpoint{x}$ to $y$, see~\cite{Nu}.
However, as shown in~\cite{walshnussbaum}, this choice of geodesics does not satisfy Busemann's non-positive
curvature condition
(even when $C=\RR_+^3$).
Actually, it is shown in~\cite{walshnussbaum} that
only a weaker inequality is valid. The latter inequality
is optimal (it is straightforward to find explicit
counter examples to the convexity inequality).

However, when $C$ is the cone $S_n^+$
of $n\times n$ positive semi-definite matrices, there is a different choice of geodesics
between any pair of matrices $Z,Y$ in the interior of $C$, involving geometric means:
\begin{align}
\gamma_Y(s)=Z^{\frac{1}{2}}\left(Z^{-\frac{1}{2}}YZ^{-\frac{1}{2}}\right)^sZ^{\frac{1}{2}} \enspace .
\label{e-geod}
\end{align}
See in particular~\cite{Nu,walshnussbaum}. The latter are indeed known to be geodesics in Thompson's and Hilbert's metric. %
Note that
when $Z$ is the identity matrix, this geodesic seen with logarithmic glasses
is a straight line: $\log \gamma_{Y}(s)=s\log Y$. The general form of the geodesic is obtained by considering the linear transformation $X\mapsto Z^{-1/2}XZ^{-1/2}$, which is an automorphism of the cone of positive definite matrices,
isometric both in Thompson's and Hilbert's metric,
and sending $Z$ to the identity.
This construction actually makes sense more generally when $C$ is a {\em symmetric cone}~\cite{faraut}, meaning that $C$ is self-dual ($C=C^\star$), and that the group of automorphisms leaving invariant the interior of $C$ acts transitively on this interior. Symmetric cones include in particular the Lorentz cones, the cones of real or complex positive semidefinite matrices, and Cartesian products of such cones. It is
shown in~\cite{gunawardenawalsh} that the metric convexity property of Definition~\ref{defistarshaped} holds for any symmetric cone, for the latter choice of geodesics, for any choice of the centre, both for the Thompson and Hilbert metric.
A similar observation was made by Lawson and Lim~\cite{lawsonlimmetric}.
Actually, the arguments in~\cite{gunawardenawalsh} apply as well to the Funk and $\Funk^+$ hemi-metrics.
Hence, the present results
apply to those self-maps of the interior of a symmetric cone that are non-expansive in any of the previously mentioned hemi-metrics.

\begin{remark}\label{rk-funk-symmetric}
When $C=S_n^+$ is the cone of positive semi-definite matrices,
when $x\in \Int C$, and $y\in C\setminus\{0\}$,
$M(y/x)$ coincides with the maximal eigenvalue of
the symmetric matrix $v:=x^{-1/2}yx^{-1/2}$. Moreover,
denoting by $w$ the (rank one) orthogonal projector on any eigenline
corresponding to this eigenvalue, one can check
that the linear form $\varphi(u):=\trace(x^{-1/2}wx^{-1/2}u)$
attains the supremum in~\eqref{e-def-funk}.
When $x\in C\setminus\{0\}$, with $y\leq \mu x$
for some $\mu>0$, a maximising linear form $\varphi$ can
be constructed in the same way, replacing $x^{-1/2}$ by
the square root of the Moore-Penrose inverse of $x$.
A dual argument allows one to construct a linear form
attaining the infimum in~\eqref{e-def-funkdual}.
Note also that these constructions can be easily adapted
to the case of a symmetric cone $C$.
\end{remark}

\begin{remark}\label{rk-psd}
In the case of the cone of symmetric positive definite matrices, the
Thompson, Hilbert, Funk, and $\Funk^+$ hemi-metrics all have the following form
\begin{align*}
\delta_{\nu}(A,B):= \nu(\log \Spec(A^{-1}B))=\nu(\log \lambda_1,\ldots,\log\lambda_n)
   \qquad A,B\in \Int S_n^+ \enspace,
\end{align*}
where $\nu$ is a symmetric function $\RR^n\to\RR$
(by {\em symmetric}, we mean that $\nu$ is invariant by permutation
of its variables), $\Spec(M)$ denotes the sequence of eigenvalues of a matrix $M$, counted
with multiplicities,
so that $\lambda_1, \cdots , \lambda_n$ denote the eigenvalues
of the matrix $A^{-1}B$ (which are real and positive since $A$ and $B$ are
positive definite). The previously listed hemi-metrics are obtained
by taking for $\nu$ the following maps, $\RR^n\to\RR$,
\[
\mu\mapsto
\max_{1\leq i\leq n}|\mu_i|,
\qquad\mu\mapsto
\max_{1\leq i,j\leq n}|\mu_i-\mu_j|,
\qquad\mu\mapsto
\max_{1\leq i\leq n}\mu_i,
\qquad\mu\mapsto
\max(\max_{1\leq i\leq n}\mu_i,0)\enspace,
\]
respectively. We may consider more generally any symmetric hemi-norm
$\nu$.
Then, the corresponding map $\delta_\nu$ can be checked to be a hemi-metric,
which is of Finsler type~\cite{nussbaumfinsler}.
Indeed, measuring the length of a vector $Y$ in the tangent space of $S_n^+$ at point $X$ by the hemi-norm $\nu(\Spec(X^{-1}Y))$, we get
\[
\delta_{\nu}(A,B)=\inf_\beta \int_0^1 \nu(\Spec(\beta(s)^{-1}\dot{\beta}(s)) ds \enspace,
\]
the infimum being taken over the set of curves $\beta$ from $[0,1]$ to
the interior of the cone, such that $\beta(0)=A$ and $\beta(1)=B$.
We recover as special cases of this construction
the invariant Riemannian metric~\cite{mostow} ($\nu$ is the Euclidean norm),  the log $p$-Schatten metric~\cite{friedlandfreitas} ($\nu$ is the $\ell_p$ norm), as well as metrics arising from symmetric Gauge functions~\cite{bhatia}
(the latter are symmetric hemi-norms that are invariant by a change of sign of each variable, and take positive values except at the origin).
For any choice of symmetric hemi-metric $\nu$, the map~\eqref{e-geod}
yields a family of geodesics for the hemi-metric $\delta_\nu$ joining a centre $Z$ to any matrix $Y$,
for which $\Int S_n^+$ is a metrically star shaped space.
This can be proved along the lines of~\cite{bhatia},
where the same property is established in the special
case in which $\nu$ is a symmetric Gauge function.
The reader may also consult~\cite{andruchowcorachstojanoff,gunawardenawalsh}, in which the significance of several matrix inequalities
in terms of non-positive curvature is brought to light.

A number of non-linear
maps, including discrete Riccati operators (Example~\ref{ex-riccati} below), are known to be non-expansive in various metrics $\delta_\nu$, see~\cite{bougerol,liverani,leelim,lawsonlimriccati}.
\end{remark}

\subsection{A Denjoy-Wolff type theorem}
We shall now consider specifically the case of order-preserving and positively
homogeneous maps. A self-map $T$ of $C$ is {\em order-preserving} if $x\leq y\implies T(x)\leq T(y)$. It is {\em positively homogeneous} (of degree one) if $T$ commutes with the product by a positive
constant. These notions make sense more generally if $T$ is only
defined on a subset of $C$ invariant by any such product.
It is readily checked that a self-map $T$
of the interior of $C$ is order-preserving and positively homogeneous if and only if it is non-expansive in the Funk metric.

\begin{corollary}\label{cor-symm}
Let $T$ be an order-preserving and positively homogeneous self-map
of the interior of a symmetric cone $C$, and let
\[
\overline{\rho}(T):=\inf_{y\in\Int C}\Funk(y,T(y)) \enspace .
\]
Then, for all $x\in \Int  C$,
\[
\overline{\rho}(T)=\rho(T):=\lim_{k\to\infty}\frac{\Funk(x,T^k(x))}{k}
\]
and there exists an extreme ray of $C$, such that for any element $w$ in this ray,
\[
\log\s(w,{T^k(x)}) \geq \log\s(w,x)+k\overline{\rho}(T).
\]
\end{corollary}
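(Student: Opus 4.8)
The plan is to derive Corollary~\ref{cor-symm} as a specialisation of the general machinery developed in the previous sections, applied to the Funk hemi-metric $\delta := \Funk$ on $X := \Int C$. First I would verify that the hypotheses of Theorem~\ref{maintheorem} are met. Since $T$ is order-preserving and positively homogeneous on $\Int C$, the remark preceding the corollary gives that $T$ is non-expansive for $\Funk$. The space $(\Int C, \Funk)$ is complete for the associated Thompson metric, and the discussion of Section~\ref{sec-funk} records that for a symmetric cone the geodesics~\eqref{e-geod} satisfy the metric convexity inequality~\eqref{eqdefstarshaped} of Definition~\ref{defistarshaped}, so $\Int C$ is metrically star-shaped in the Funk hemi-metric. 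Hence Theorem~\ref{maintheorem} and Corollary~\ref{coregalite} apply, yielding immediately the equality $\overline{\rho}(T) = \rho(T)$ together with a Martin function $h \in \sM$ satisfying $h(T^k(x)) \geq h(x) + k\overline{\rho}(T)$ for all $x$ and all $k$, via Corollary~\ref{cor-pumping}.

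The remaining content is to translate the inequality on the abstract Martin function $h$ into the concrete statement about an extreme ray and the linear form $w \mapsto \log\s(w,\cdot)$. Here I would invoke the characterisation of the Martin space for $\Funk$ given by Walsh's proposition: every Martin function has the form~\eqref{e-martinfunction}, $h(x) = -\Funk(x,u) + \Funk(\basepoint{x},u)$ for some $u \in C\setminus\{0\}$. Writing the pumping inequality explicitly and cancelling the basepoint constants, the condition $h(T^k(x)) \geq h(x) + k\overline{\rho}(T)$ becomes $\Funk(T^k(x),u) \leq \Funk(x,u) - k\overline{\rho}(T)$, i.e.\ $\log M(u/T^k(x)) \leq \log M(u/x) - k\overline{\rho}(T)$.

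The key step is then to pass from the function-valued $M(u/\cdot)$ to a single linear form. By Lemma~\ref{lem-funk}, $M(u/x) = \sup_{\varphi \in \Extr C^\star,\, \varphi(x)\neq 0} \varphi(u)/\varphi(x)$, and since $x \in \Int C$ the supremum is attained at some extreme ray of $C^\star$. I would choose a representative $\varphi$ of such an extremal ray; then $M(u/x) = \varphi(u)/\varphi(x)$ while $M(u/T^k(x)) \geq \varphi(u)/\varphi(T^k(x))$ for the same $\varphi$. Combining with the pumped inequality gives $\varphi(u)/\varphi(T^k(x)) \leq (\varphi(u)/\varphi(x))\exp(-k\overline{\rho}(T))$, which on taking logarithms and rearranging yields $\log\varphi(T^k(x)) \geq \log\varphi(x) + k\overline{\rho}(T)$. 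Finally I would use self-duality of the symmetric cone ($C = C^\star$) to identify the extremal linear form $\varphi$ with an element $w$ of an extreme ray of $C$ via the inner product, so that $\varphi(\cdot) = \s(w,\cdot)$, giving the desired $\log\s(w,T^k(x)) \geq \log\s(w,x) + k\overline{\rho}(T)$.

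The main obstacle I anticipate is the correct bookkeeping in the quantifier on $\varphi$: the extremal $\varphi$ realising $M(u/x)$ is selected for the fixed point $x$, but the inequality must hold for all $k$ simultaneously with the \emph{same} $w$. This is exactly why the argument uses $M(u/T^k(x)) \geq \varphi(u)/\varphi(T^k(x))$ as an inequality (valid for every $\varphi$) on the $k$-side while using equality only at $k$ for the chosen $\varphi$; one must take care that $\overline{\rho}(T)$ on the two sides matches and that $\varphi(T^k(x)) \neq 0$, which holds since $T^k(x) \in \Int C$ and $\varphi \in C^\star\setminus\{0\}$ cannot vanish on the interior (as established in the proof of Lemma~\ref{lem-funk}). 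Self-duality also ensures the extreme rays of $C^\star$ correspond to extreme rays of $C$, closing the identification.
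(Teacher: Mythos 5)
Your proof is correct, but it takes a genuinely different route from the paper's. The common part is the reduction: non-expansiveness of $T$ in $\Funk$, star-shapedness of $(\Int C,\Funk)$ via the geodesics \eqref{e-geod} and \cite{gunawardenawalsh}, hence $\overline{\rho}(T)=\rho(T)$ (Corollary~\ref{coregalite}) and a Martin function $h$ with $h(T^k(x))\geq h(x)+k\overline{\rho}(T)$ for all $x,k$ (Corollary~\ref{cor-pumping}). From there you invoke Walsh's characterisation \eqref{e-martinfunction} of the Funk Martin space to write $h(\cdot)=-\Funk(\cdot,u)+\Funk(\basepoint{x},u)$ for some $u\in C\setminus\{0\}$, then use the attainment part of Lemma~\ref{lem-funk} \emph{once}, at the single point $x$, to select an extremal form $\varphi$ with $M(u/x)=\varphi(u)/\varphi(x)$, while on the $T^k(x)$ side you only need the sup-representation \eqref{e-def-funk} as an inequality valid for this same $\varphi$; self-duality turns $\varphi$ into $\s(w,\cdot)$ with $w\in\Extr C$. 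The paper never uses Walsh's proposition in this proof: it writes $h=\lim_m i(z_m)$ for a sequence $z_m\in\Int C$ (the Martin space is metrisable here, Remark~\ref{remark-topo-martin}), chooses for each $m$ an extremal $w_m\in\Extr C$ attaining $\Funk(x,z_m)$ by Lemma~\ref{lem-funk}, normalises each $w_m$ to be a primitive idempotent of the Jordan algebra, extracts a convergent subsequence using the compactness of the set of primitive idempotents \cite[Chap.~4]{faraut} (this is what guarantees that the limit $w$ still spans an extreme ray), and passes to the limit in the resulting inequalities. What each approach buys: yours is shorter and purely convex-geometric, avoiding the Jordan-algebraic facts, but it leans on the nontrivial external boundary characterisation of \cite{walsh}; it is in essence the argument the paper deploys later for Corollary~\ref{cor-symm2}, where that characterisation is unavoidable. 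The paper's argument is self-contained apart from the primitive-idempotent compactness, and needs no knowledge of the form of the Funk horofunctions. One detail you should make explicit: cancelling $\varphi(u)$ in your final rearrangement requires $\varphi(u)>0$; this holds because $\varphi(u)/\varphi(x)=M(u/x)$, which the paper notes is finite and positive, and $\varphi(x)>0$ since $x\in\Int C$ and $\varphi\in C^\star\setminus\{0\}$.
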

Here, $\s(\cdot,\cdot)$ denotes the scalar product of the Euclidean
space in which $C$ is self-dual.
\begin{proof}
The map $T$ is non-expansive in the Funk metric.
By Corollary~\ref{coregalite}, $\rho(T)=\overline{\rho}(T)$.
Let $h$ be the Martin function appearing in Theorem~\ref{maintheorem}.
Then, there exists a sequence of elements of $\Int  C$, $(z_m)_{m\geq 1}$
such that $i(z_m)$ converges to $h$. Let us now fix $x\in\Int  C$.
Since $C=C^\star$, any element $\varphi$ of an extreme ray of $C^\star$ can
be written as $\varphi(y)=\s(w,y)$ where $w$ is an element of an extreme
ray of $C$. Then, it follows from Lemma~\ref{lem-funk} that
for all $m$, there exists an element $w_m\in \Extr C$
such that
\[
\Funk(x,z_m)=\log\frac{\s(w_m,z_m)}{\s(w_m,x)}.
\]
A ray of $C$ is known to be extreme if and only if it contains a primitive idempotent of the associated Jordan algebra~\cite[Prop.~IV.3.2]{faraut}.
Moreover, the set of these primitive
idempotents is compact (Exercise~5 of Chap.~4 in~\cite{faraut}).
Since the previous property is invariant by a scaling of $w_m$, we may
assume that each $w_m$ is a primitive idempotent, and so, possibly after
extracting a subsequence, we may assume that $w_m$ converges to a primitive
idempotent $w$, which therefore belongs to an extreme ray of $C$.
For all $k\geq 0$, we get
\begin{align*}
\log \s(w,{T^k(x)})-\log\s(w,x)
&=\lim_m \log \s(w_m,{T^k(x)})-\log\s(w_m,x)\nonumber\\
&\geq \lim_m -\Funk(T^k(x),z_m)+\Funk(x,z_m)\nonumber\\
&=h(T^k(x))-h(x)\geq k\overline{\rho}(T)
\end{align*}
which concludes the proof.
\end{proof}

We next point out a variant of Corollary~\ref{cor-symm} concerning
order-preserving and sub-homogeneous self-maps of the
interior of a symmetric cone.
Recall that {\em sub-homogeneous} means
that for all $x\in \Int C$ and $\lambda\geq 1$, $T(\lambda x)\leq \lambda T(x)$.
Order-preserving and sub-homogeneous maps are easily seen
to be characterised by the following property
\[
\Funk^+(T(x),T(y))\leq \Funk^+(x,y)\enspace .
\]

\begin{corollary}\label{cor-symm2}
Let $T$ be an order-preserving and sub-homogeneous self-map
of the interior of a symmetric cone $C$, and let
\[
\overline{\rho}(T):=\inf_{y\in\Int C}\Funk^+(y,T(y)) \enspace .
\]
Then, for all $x\in \Int  C$,
\[
\overline{\rho}(T)=\rho(T):=\lim_{k\to\infty}\frac{\Funk^+(x,T^k(x))}{k}
\]
and, if $\overline{\rho}(T)>0$, there exists an extreme ray of $C$, such that for any element $w$ in this ray,
\[
\log\s(w,{T^k(x)}) \geq \log\s(w,x)+k\overline{\rho}(T).
\]
\end{corollary}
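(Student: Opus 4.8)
The plan is to follow the proof of Corollary~\ref{cor-symm} almost verbatim, the one genuinely new point being to check that the horofunction produced by the main theorem is of the ``reverse Funk'' type~\eqref{e-martinfunction} and not of the boundary-supported type~\eqref{e-martinfunctionsfunkplus}, since only the former carries the extreme-ray representation supplied by Lemma~\ref{lem-funk}.

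First I would set up the hypotheses of Theorem~\ref{maintheorem}. The characterisation recalled just before the statement shows that $T$ is non-expansive for $\delta:=\Funk^+$. The metric associated with $\Funk^+$ is $\max(\Funk^+(x,y),\Funk^+(y,x))=d_T(x,y)$, the Thompson metric (since $d_T\geq 0$), which is complete on $\Int C$ exactly as in Corollary~\ref{cor-symm}; moreover the geometric-mean geodesics~\eqref{e-geod} make $(\Int C,\Funk^+)$ metrically star-shaped, by the extension to $\Funk^+$ of the argument of~\cite{gunawardenawalsh} quoted above, the additional requirement in Definition~\ref{defistarshaped} holding because $\Funk^+(y,\gamma_y(s))\leq d_T(y,\gamma_y(s))\to 0$ as $s\to 1$. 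Corollary~\ref{coregalite} then yields $\overline{\rho}(T)=\rho(T)$, which is the first assertion, and Corollary~\ref{cor-pumping} provides a Martin function $h\in\sM$ with $h(T^k(x))\geq h(x)+k\overline{\rho}(T)$ for all $x$ and $k$; since $\Funk^+\geq 0$ is bounded from below and $\overline{\rho}(T)>0$, this $h$ is a horofunction.

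Next I would determine the shape of $h$. As $\Int C$ is a countable union of compact sets, the Martin space is metrisable (Remark~\ref{remark-topo-martin}), so $h=\lim_m i(z_m)$ for a sequence $(z_m)$ in $\Int C$. I claim $\|z_m\|\to\infty$. If not, some subsequence converges, $C$ being closed, to a point $u\in C$: if $u\in\Int C$ then $h=i(u)$ lies in $i(X)$, contradicting that $h$ is a horofunction; if $u\in\partial C$ then, passing to the limit (using continuity of $\Funk$ in its second argument on $C$, as in Proposition~\ref{prop-walsh-funkp}), $h(T^k(x))-h(x)=\Funk^+(x,u)-\Funk^+(T^k(x),u)\leq\Funk^+(x,u)$ would remain bounded in $k$ (here $\Funk^+\geq 0$ and $\Funk^+(x,u)<\infty$ since $x\in\Int C$), contradicting the pumping inequality with $\overline{\rho}(T)>0$. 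Hence $\|z_m\|\to\infty$, so for each fixed $y\in\Int C$ one has $\Funk(y,z_m)\to+\infty$, whence $\Funk^+(y,z_m)=\Funk(y,z_m)$ for $m$ large; thus $h$ is the reverse Funk Martin function~\eqref{e-martinfunction} attached to some $u\in C\setminus\{0\}$, and at each fixed $y$ the increments of the $i(z_m)$ eventually coincide with those of the reverse Funk kernel.

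Finally I would carry out the extreme-ray extraction of Corollary~\ref{cor-symm}. Fix $x\in\Int C$. Since $x\in\Int C$ and $C=C^\star$, Lemma~\ref{lem-funk} furnishes for each $m$ a representative $w_m\in\Extr C$ of an extreme ray with $\Funk(x,z_m)=\log\frac{\s(w_m,z_m)}{\s(w_m,x)}$, while for any $k$ the same lemma gives $\Funk(T^k(x),z_m)\geq\log\frac{\s(w_m,z_m)}{\s(w_m,{T^k(x)})}$; subtracting,
\[
\log\s(w_m,{T^k(x)})-\log\s(w_m,x)\geq -\Funk(T^k(x),z_m)+\Funk(x,z_m).
\]
Normalising each $w_m$ to the primitive idempotent of its ray and invoking the compactness of the set of primitive idempotents (as in the proof of Corollary~\ref{cor-symm}), I extract $w_m\to w$, which again spans an extreme ray of $C$. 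Letting $m\to\infty$ and using that $\Funk^+$ and $\Funk$ agree eventually at the fixed points $x$ and $T^k(x)$, so that the right-hand side tends to $h(T^k(x))-h(x)$, I obtain
\[
\log\s(w,{T^k(x)})-\log\s(w,x)\geq h(T^k(x))-h(x)\geq k\overline{\rho}(T),
\]
the logarithms being finite because $w\in C^\star\setminus\{0\}$ pairs strictly positively with the interior points $x$ and $T^k(x)$. The main obstacle is precisely the middle paragraph: the whole scheme breaks down for the boundary horofunctions~\eqref{e-martinfunctionsfunkplus}, which admit no extreme-ray representation, and excluding them is exactly where the hypothesis $\overline{\rho}(T)>0$ enters.
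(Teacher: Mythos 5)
Your proposal is correct and follows essentially the same route as the paper: apply Theorem~\ref{maintheorem} with $\delta=\Funk^+$, rule out norm-bounded subsequences of $(z_m)$ to force $h$ into the reverse-Funk form~\eqref{e-martinfunction}, and then run the extreme-ray (primitive idempotent) extraction of Corollary~\ref{cor-symm}. The only cosmetic difference is that where the paper simply invokes the parenthetical observation at the end of the proof of Theorem~\ref{maintheorem} together with Proposition~\ref{prop-walsh-funkp}, you spell out the same boundedness-versus-pumping contradiction as an explicit case analysis on the limit $u\in\Int C$ or $u\in\partial C$, which is a faithful (arguably slightly more careful) rendering of the same argument.
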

\begin{proof}
Since the hemi-metric $\Funk^+$ is bounded from below,
the observation made at the end of the proof of Theorem~\ref{maintheorem}
shows that if $z_m$ is any sequence of points of $\Int C$ such that $i(z_m)$ converges to the Martin function $h$ in this theorem, then, the sequence $z_m$ must be such that $\|z_m\|\to\infty$. Then,
it follows from the characterisation of the boundary of the $\Funk^+$ metric
(proof of Proposition~\ref{prop-walsh-funkp})
that $h$, which cannot be of the form~\eqref{e-martinfunctionsfunkplus},
is necessarily
of the form~\eqref{e-martinfunction}.
Therefore, we conclude as in the proof of Corollary~\ref{cor-symm}.
\end{proof}
\begin{remark}
Corollary~\ref{cor-symm} should be compared with a related result of Gunawardena and Walsh~\cite[Th.~1]{gunawardenawalsh}, in which the map $T$ is only required to be non-expansive in Thompson's metric. Then, it is shown that all the accumulation points of the sequence
$(\log T^k(x))/k$ belong to the same face of a ball which respect to the norm
in the tangent space of $C$ at point $x$.
\end{remark}
\begin{remark}
Lins established a Denjoy-Wolff theorem of a different nature in~\cite{lins},
concerning self-maps of a polyhedral domain that are non-expansive in Hilbert's
metric.
\end{remark}
\subsection{The Collatz-Wielandt theorem}
We now show that the Collatz-Wielandt formula~\eqref{e-cw}
is a special case of the maximin formula~\eqref{e-minimax}.

If $T$ is an order preserving and positively sub-homogeneous self-map of the interior of a (closed, convex and pointed) cone $C$ in a finite dimensional
vector space,  we define the {\em radial extension} $\hat{T}$ of $T$
to the closed cone $C$
by
\[
\hat{T}(x):=\lim_{\epsilon\to 0^+} T(x+\epsilon z),
\qquad \forall x\in C\enspace,
\]
where $z$ is any point in the interior of $C$. One verifies that this definition is independent of the choice of $z$; equivalently, we may define $\hat{T}(x)$ as $\inf \{T(y)\mid y-x\in \Int C\}$. The order preserving and
positively (sub-)homogeneous character of $T$ carries over to $\hat{T}$.
It is shown in~\cite{MR1960046} that $T$ does not necessarily have
a continuous extension to $C$, but that it always does
if $C$ is polyhedral.
\begin{lem}\label{lem-ourcw}
Let $T$ be an
order-preserving and positively
sub-homogeneous self-map of the interior of
a (convex closed and pointed) cone $C$
in a finite dimensional vector space,
and consider the Martin function $h$ in~\eqref{e-martinfunction}
associated to a vector $u\in C\setminus\{0\}$. Then, for all
$\lambda\in\RR$,
\begin{align}
\big(\forall x\in \Int C, \; h(T(x))-h(x)\geq \lambda \big)\iff
\big(\forall \gamma>0,\; \hat{T}(\gamma u)\geq \gamma \exp(\lambda)u\big)\enspace .
\label{e-tech}
\end{align}
\end{lem}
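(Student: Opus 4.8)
The plan is to reduce the equivalence to a single pointwise reformulation of the inequality $h(T(x))-h(x)\geq\lambda$, and then to transport that reformulation between an arbitrary interior point and the ray through $u$ by means of the monotonicity of $\hat T$. First I would unwind the definitions: since $h(y)=-\Funk(y,u)+\Funk(\basepoint{x},u)$ and $\Funk(y,u)=\log M(u/y)$, one has $h(T(x))-h(x)=\log M(u/x)-\log M(u/T(x))$. Because $C$ is closed, the set $\{\mu>0\mid \mu v\geq u\}$ is exactly $[M(u/v),+\infty)$, so $M(u/T(x))\leq e^{-\lambda}M(u/x)$ is equivalent to $e^{-\lambda}M(u/x)\,T(x)\geq u$. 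Dividing by the positive scalar $e^{-\lambda}M(u/x)$ yields the pivot
\[
h(T(x))-h(x)\geq\lambda \iff T(x)\geq \frac{e^{\lambda}}{M(u/x)}\,u,
\]
valid for each fixed $x\in\Int C$.

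To prove the implication $(\Rightarrow)$, I would assume the pivot inequality holds for all $x$ and fix $\gamma>0$. For every $y$ with $y-\gamma u\in\Int C$ one has $\gamma u\leq y$, hence $\tfrac1\gamma y\geq u$, so that $M(u/y)\leq 1/\gamma$; the pivot then gives $T(y)\geq \tfrac{e^{\lambda}}{M(u/y)}u\geq \gamma e^{\lambda}u$, using the scalar monotonicity $a\geq b\geq 0,\ u\in C\Rightarrow au\geq bu$. Since this bound holds for every admissible $y$, passing to the infimum (equivalently, to the limit $\hat T(\gamma u)=\lim_{\epsilon\to 0^{+}}T(\gamma u+\epsilon z)$) and using that $C$ is closed gives $\hat T(\gamma u)\geq\gamma e^{\lambda}u$.

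For $(\Leftarrow)$, I would fix $x\in\Int C$ and set $\gamma:=1/M(u/x)$, so that $\gamma u\leq x$ by the very definition of $M(u/x)$. Since $\hat T$ is order-preserving and agrees with $T$ on $\Int C$ (being an extension of $T$), I obtain $T(x)=\hat T(x)\geq \hat T(\gamma u)\geq \gamma e^{\lambda}u=\tfrac{e^{\lambda}}{M(u/x)}u$, and the pivot turns this back into $h(T(x))-h(x)\geq\lambda$.

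The algebraic unwinding and the scalar inequalities are routine. The only delicate point is the interplay between the non-strict relations $\gamma u\leq x$ (resp. $\gamma u\leq y$) and the strict relation $y-\gamma u\in\Int C$ that enters the definition of $\hat T$. In the direction $(\Rightarrow)$ this causes no trouble, since every $y$ in the defining family of $\hat T(\gamma u)$ is automatically admissible. In the direction $(\Leftarrow)$ I deliberately avoid comparing $T(x)$ with an infimum over strictly larger points, and instead absorb the difficulty by invoking the monotonicity of $\hat T$ together with the identity $\hat T|_{\Int C}=T$. I expect the verification that $\hat T$ inherits order-preservation and coincides with $T$ on the interior --- facts recorded just before the statement --- to be the main thing to handle with care.
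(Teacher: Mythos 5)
Your proof is correct, and its second half takes a genuinely different route from the paper's. The pivot reformulation $h(T(x))-h(x)\geq\lambda \iff T(x)\geq e^{\lambda}M(u/x)^{-1}u$ is exactly the paper's inequality~\eqref{e-myrw}, and your forward direction (evaluate at $y=\gamma u+\epsilon z$, bound $M(u/y)\leq 1/\gamma$, let $\epsilon\to 0^{+}$, and use closedness of $C$) is the paper's argument almost verbatim. The converse is where you diverge. The paper fixes $x$, picks $\gamma$ so that $\gamma e^{\Funk(x,u)}\geq 1$ (the text says ``$\gamma\Funk(x,u)\geq 1$'', evidently a typo, since otherwise no admissible $\gamma$ exists when $\Funk(x,u)\leq 0$), and runs the chain
\begin{align*}
\gamma u\;\leq\; e^{-\lambda}\hat T(\gamma u)\;\leq\; e^{-\lambda}\hat T\bigl(\gamma e^{\Funk(x,u)}x\bigr)\;\leq\; e^{-\lambda}\gamma e^{\Funk(x,u)}\hat T(x)\;=\;\gamma e^{-\lambda+\Funk(x,u)}T(x)\enspace;
\end{align*}
that is, it scales $x$ \emph{up} until a multiple of it dominates $\gamma u$, and then needs sub-homogeneity of $\hat T$ to extract the scalar. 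You instead scale $u$ \emph{down} to the critical multiple $\gamma=1/M(u/x)$, which is dominated by $x$ itself (attainment of the infimum defining $M(u/x)$, by closedness of $C$), and then use only monotonicity of $\hat T$ together with $\hat T|_{\Int C}=T$ to get $T(x)=\hat T(x)\geq\hat T(\gamma u)\geq\gamma e^{\lambda}u$. Your route is more economical: sub-homogeneity of $\hat T$ never enters the converse explicitly (it is needed only, exactly as in the paper, to guarantee that $\hat T$ really extends $T$, i.e.\ that $T$ is continuous on $\Int C$ --- a fact the paper's chain also uses silently in its final equality $\hat T(x)=T(x)$), and you avoid the fuss over choosing $\gamma$ large enough. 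You were also right to flag the boundary issue: at your critical $\gamma$, the vector $x-\gamma u$ generally lies on $\partial C$, so $x$ is not admissible in the infimum defining $\hat T(\gamma u)$, and invoking the order-preservation of $\hat T$ (asserted in the paper just before the lemma) rather than that infimum is the correct fix.
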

\begin{proof}
Since, $h(T(x))-h(x)=-\Funk(T(x),u)+\Funk(x,u)$, the left-hand side condition
in~\eqref{e-tech} holds if and only if
\begin{align}
u\leq \exp\big(-\lambda+\Funk(x,u)\big)T(x),\qquad \forall x\in \Int C \enspace .
\label{e-myrw}
\end{align}
For $\epsilon>0$ and $\gamma>0$, define $x_\epsilon:=\gamma u+\epsilon z$, where
$z$ is a given element in $\Int C$.
Then,
$\Funk(x_\epsilon, u)\leq \Funk(\gamma u, u)=-\log \gamma $.
Taking $x=x_\epsilon$ in~\eqref{e-myrw},
and letting $\epsilon$ tend to zero,
we get
\begin{align*}
u\leq \exp(-\lambda)\gamma^{-1}\hat{T}(\gamma u) \enspace ,
\end{align*}
and so, the condition at the right-hand side of~\eqref{e-tech}
holds. Conversely, fix $x\in \Int C$, and
choose $\gamma$ such that $\gamma\Funk(x,u)\geq 1$. We have
\begin{align*}
\gamma u &\leq \exp(-\lambda) \hat{T}(\gamma u) \qquad 
\text{(using the right-hand side of~\eqref{e-tech})}\\
&\leq  \exp(-\lambda)
\hat{T}(\gamma\exp(\Funk(x,u))x)
\qquad \text{(since $\hat{T}$ is order preserving)}\\
&\leq  \exp(-\lambda)
\gamma\exp(\Funk(x,u))\hat{T}(x)
\qquad \text{(since $\hat{T}$ is sub-homogeneous)}\\
&=
\gamma \exp(-\lambda + \Funk(x,u))T(x) \enspace .
\end{align*}
Cancelling $\gamma$, we arrive at~\eqref{e-myrw}.
\end{proof}

Observe that when $T$ is positively sub-homogeneous,
the following {\em recession map}
\begin{align}\label{e-def-recession}
\hat{T}_r:C\to C,\qquad \hat{T}_r(x)=\lim_{\gamma\to\infty}
\gamma^{-1}{\hat{T}(\gamma x)}
=\inf_{\gamma>0}\gamma^{-1}{\hat{T}(\gamma x)}
\end{align}
is well defined. Of course, $\hat{T}=\hat{T}_r$ if $T$ is positively
homogeneous.
We now arrive at the following corollary of Theorem~\ref{th-minimax},
which extends the Collatz-Wielandt formula~\eqref{e-cw}.
\begin{corollary}\label{th-ourcw}
Let $T$ be an
order-preserving  and positively sub-homogeneous self-map
of the interior of
 a symmetric cone in a finite dimensional vector space,
and assume that
\begin{align}\label{e-def-escapefunk}
\rho(T):=\lim_{k\to\infty}\frac{\log\Funk(x,T^k(x))}{k} >0 \enspace .
\end{align}
Then,
\begin{align}\label{e-cw-gen}
\inf_{y\in \Int C}
\max_{w\in \Extr C}\log\frac{\s(w,{T(y)})}{\s(w,y)}
=\rho(T)=
\max_{u\in C\setminus\{0\}}\min_{\scriptstyle w\in \Extr C\atop\scriptstyle  \s(w,u)\neq 0}
 \log \frac{\s(w,{\hat{T}_r(u)})}{\s(w,u)} \enspace .
\end{align}
Moreover, when $T$ is positively homogeneous, the same
conclusion remains valid even when the condition $\rho(T)>0$
does not hold.
\end{corollary}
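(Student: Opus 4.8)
The plan is to establish the two equalities in~\eqref{e-cw-gen} separately, translating the abstract maximin formula~\eqref{e-minimax} of Theorem~\ref{th-minimax} into the concrete language of the symmetric cone via the explicit description of the Funk horoboundary. First I would verify that the hypotheses of Theorem~\ref{th-minimax} apply: the interior of a symmetric cone, equipped with the Funk hemi-metric, is a complete metrically star-shaped space (using the geometric-mean geodesics~\eqref{e-geod} as discussed after Proposition~\ref{prop-walsh-funkp}), and the condition that $T$ be order-preserving and positively sub-homogeneous is exactly non-expansiveness for $\Funk$. Thus Theorem~\ref{th-minimax} gives $\rho(T)=\overline\rho(T)=\inf_{y}\Funk(y,T(y))$, and since $\Funk(y,T(y))=\log M(T(y)/y)$, Lemma~\ref{lem-funk} rewrites $M(T(y)/y)=\max_{w\in\Extr C}\s(w,T(y))/\s(w,y)$ (using self-duality $C=C^\star$ to identify extreme rays of $C^\star$ with those of $C$ via the scalar product). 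This yields the left-hand equality in~\eqref{e-cw-gen} directly.

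For the right-hand equality I would start from the maximin side of~\eqref{e-minimax}, namely $\rho(T)=\max_h\inf_x\big(h(T(x))-h(x)\big)$ over Martin functions $h$. By Walsh's description (Proposition~\ref{prop-walsh-funkp} / the cited~\cite[Prop.~2.5]{walsh}), every Martin function $h$ for the Funk hemi-metric is of the form~\eqref{e-martinfunction} for some $u\in C\setminus\{0\}$. The key algebraic step is Lemma~\ref{lem-ourcw}, which converts the condition $\inf_x\big(h(T(x))-h(x)\big)\geq\lambda$ into the pointwise cone inequality $\hat T(\gamma u)\geq\gamma\exp(\lambda)u$ for all $\gamma>0$. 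Passing to the recession map via~\eqref{e-def-recession}, this is equivalent to $\hat T_r(u)\geq\exp(\lambda)u$, i.e. $\s(w,\hat T_r(u))\geq\exp(\lambda)\s(w,u)$ for every $w\in\Extr C$; taking the best such $\lambda$ for a given $u$ gives $\lambda=\min_{w:\s(w,u)\neq0}\log\big(\s(w,\hat T_r(u))/\s(w,u)\big)$. Maximising over $u\in C\setminus\{0\}$ then reproduces the right-hand member of~\eqref{e-cw-gen}, and the maximum is attained because the maximum in~\eqref{e-minimax} is attained at a horofunction (here $u\in\partial C\setminus\{0\}$).

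The main obstacle I anticipate is the careful reduction from $\hat T$ to the recession map $\hat T_r$, since $T$ is only assumed sub-homogeneous rather than homogeneous: Lemma~\ref{lem-ourcw} is phrased with $\hat T$, but~\eqref{e-cw-gen} uses $\hat T_r$, so I must check that replacing $\hat T(\gamma u)\geq\gamma\exp(\lambda)u$ (required for all $\gamma>0$) by the single inequality $\hat T_r(u)\geq\exp(\lambda)u$ is faithful — this is exactly where the $\inf_{\gamma>0}\gamma^{-1}\hat T(\gamma u)$ formulation of~\eqref{e-def-recession} is needed. A secondary point requiring attention is the role of the hypothesis $\rho(T)>0$: it guarantees (via the last part of Theorem~\ref{th-minimax}, using that $\Funk$ is bounded below only after the $\Funk^+$-type argument, or directly via the final remark of the proof of Theorem~\ref{maintheorem}) that the optimal $u$ lies on the boundary, so that the extreme ray is genuinely an extreme ray of $C$ and the Collatz-Wielandt duality is non-degenerate. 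When $T$ is positively homogeneous one has $\hat T=\hat T_r$, the recession subtlety disappears, and the equality~\eqref{e-cw-gen} can be obtained even without $\rho(T)>0$ by a direct homogeneity argument, which I would treat as the final, easier case.
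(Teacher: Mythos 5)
Your plan breaks at its very first step, and the break is not repairable within your framework. You assert that ``order-preserving and positively sub-homogeneous is exactly non-expansiveness for $\Funk$''; this is false, and the paper says so explicitly: order-preserving together with positive \emph{homogeneity} characterises non-expansiveness in $\Funk$, whereas order-preserving together with \emph{sub-homogeneity} characterises non-expansiveness in $\Funk^+=\max(\Funk,0)$ (see the discussion before Corollary~\ref{cor-symm2}). The reason is elementary: if $\lambda x\geq y$ with $\lambda<1$, sub-homogeneity gives $T(\lambda x)\geq \lambda T(x)$, which is the wrong direction, so $\Funk(T(x),T(y))\leq \Funk(x,y)$ may fail whenever $\Funk(x,y)<0$. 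Consequently, in the sub-homogeneous case you are not entitled to invoke Theorem~\ref{th-minimax} for the hemi-metric $\Funk$ at all, and both halves of your argument (the left equality via $\overline\rho(T)=\inf_y\Funk(y,T(y))$, and the right equality via Walsh's description of the $\Funk$ Martin space) lose their foundation.

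The paper's proof works instead with $\delta=\Funk^+$, for which the sub-homogeneous $T$ genuinely is non-expansive, and this is precisely where the hypothesis $\rho(T)>0$ earns its keep --- not, as you suggest, to force the optimal $u$ onto the boundary of $C$. It is used twice: first, since $\Funk$ and $\Funk^+$ agree wherever either is positive, $\rho(T)>0$ identifies the $\Funk^+$ escape rate $\rho_+(T)$ with $\rho(T)$ and lets one replace $\Funk^+(y,T(y))$ by $\Funk(y,T(y))$ in $\overline\rho_+(T)$, which combined with~\eqref{e-def-funk} gives the left equality in~\eqref{e-cw-gen}; second, the Martin space of $\Funk^+$ (Proposition~\ref{prop-walsh-funkp}) is strictly larger than that of $\Funk$, containing the extra functions~\eqref{e-martinfunctionsfunkplus}, and the argument of Corollary~\ref{cor-symm2} (relying on $\Funk^+$ being bounded below and the final remark in the proof of Theorem~\ref{maintheorem}) is needed to rule these out, so that the optimal Martin function is of the form~\eqref{e-martinfunction} and Lemma~\ref{lem-ourcw} together with the recession construction~\eqref{e-def-recession} can be applied --- your treatment of that lemma and of the passage from $\hat T$ to $\hat T_r$ is otherwise correct. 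Your final paragraph on the positively homogeneous case (work directly with $\Funk$, where $\hat T=\hat T_r$ and $\rho(T)>0$ is unnecessary) does match the paper's concluding argument; it is the sub-homogeneous case, which is the substantive part of the corollary, that your proposal does not actually prove.
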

\begin{proof}
We apply Theorem~\ref{maintheorem} to the space $X:=\Int C$ equipped with the hemi-metric $\delta:=\Funk^+$. Let us denote by $\rho_+(T)$ the escape
rate of $T$ in this hemi-metric, so that
\begin{align}\label{e-cw-rhoplus}
\rho_+(T)=\inf_{y\in\Int C}\Funk^+(y,T(y)) \enspace .
\end{align}
Since $\Funk^+=\max(\Funk,0)$, the hemi-metrics
$\Funk$ and $\Funk^+$ coincide as soon as $\Funk^+$ or $\Funk$ is positive. Hence, $\rho(T)>0$ implies that $\rho_+(T)=\rho(T)$ and that the term $\Funk^+(y,T(y))$ in~\eqref{e-cw-rhoplus} can be replaced by $\Funk(y,T(y))$.
Together with~\eqref{e-def-funk}, this gives the first equality in~\eqref{e-cw-gen}.

We observed in the proof of Corollary~\ref{cor-symm2} that when
the escape rate in the $\Funk^+$ hemi-metric is positive, the corresponding
horofunction $h$ in Theorem~\ref{maintheorem} is necessarily
of the form~\eqref{e-martinfunction}.
Since the limit coincides with the infimum in the definition of the recession function (see \eqref{e-def-recession}), the right-hand side
condition in~\eqref{e-tech} is equivalent to
\[
\hat{T}_r(u)\geq \exp(\lambda) u \enspace.
\]
Then, using Lemma~\ref{lem-ourcw}, we rewrite the maximin characterisation
in Theorem~\ref{maintheorem} as
\begin{align*}
\rho(T)&=\max_{h}\inf_{x\in \Int C}h(T(x))-h(x)\nonumber\\
&= \max\{\lambda\mid \exists u\in C\setminus\{0\},\; \hat{T}_r(u)\geq \exp(\lambda)u \}
\nonumber
\\
&=\max_{u\in C\setminus\{0\}}
\log \inf_{w\in \Extr C\;\s(w,u)\neq 0}\frac{\s(w,{\hat{T}_r(u)})}{\s(w,u)}
\qquad \text{(by~\eqref{e-def-funkdual}).}
\end{align*}
Moreover, it follows from Remark~\ref{rk-funk-symmetric} that the latter infimum is attained.

Finally, when $T$ is positively homogeneous, we
equip the space $\Int C$ with the hemi-metric $\Funk$
instead of $\Funk^+$. Then, the first equality in~\eqref{e-cw-gen} follows readily from Theorem~\ref{maintheorem} and~\eqref{e-def-funk} (the assumption that $\rho(T)>0$
is not needed any more), and the proof of the second inequality
is unchanged.
\end{proof}

\begin{remark}\label{cara-handy}
The characterisation~\eqref{e-cw-gen} can be rewritten equivalently as
\begin{align}
\rho(T)&=
\log\inf\{ \mu >0 \mid \exists y\in \Int C,\; T(y)\leq \mu y\}\label{e-new1}\\
& = \log\max\{ \mu \geq 0\mid \exists u\in  C\setminus\{0\},\; \hat{T}_r(u)\geq \mu u\}
\label{e-new2}
\enspace .
\end{align}
\end{remark}
\begin{remark}
When $C$ is the standard positive cone, and
$T$ is a {\em continuous} order preserving and positively homogeneous
self-map of the {\em closed} cone $C$, Nussbaum~\cite{nussbaum86} showed
that
\begin{align*}
\rho(T)=
 \log\max\{ \mu \geq 0\mid \exists u\in  C\setminus\{0\},\; {T}(u)= \mu u\}
\enspace,
\end{align*}
which is more accurate than~\eqref{e-new2}.
Moreover, some of the results
of~\cite{nussbaum86} have been extended in~\cite{Nuss-Mallet}
to more general (normal, possibly infinite dimensional) cones.
However, Corollary~\ref{th-ourcw} is
applicable when $T$ is only defined on the {\em interior} of the cone and does not extend continuously to its boundary (this situation does occur in practice, see the discussion in~\cite{MR1960046} of the matrix harmonic mean).
\end{remark}

\begin{example}\label{ex-riccati}
We finally illustrate the previous Denjoy-Wolff type results with an example originating from quadratic optimal control.
Let $A,B$ be $n\times n$ positive semidefinite matrices, and let $M$ be a
$n\times n$ invertible matrix. Consider the following
discrete time Riccati operator, which is a self-map of
the interior of the cone $S_n^+$ of $n\times n$ positive semidefinite symmetric
matrices,
\[
T(X)=A+M(B+X^{-1})^{-1}M^* \enspace ,
\]
where $M^*$ denotes the transpose of $M$.
We refer the reader to~\cite{bougerol,liverani,leelim} for more background on these maps. In particular, $T$ is order-preserving,
and, as shown by Liverani and Wojtkowski~\cite{liverani}, $T$ is non-expansive in the Thompson metric. We noted above that the latter property,
together with the preservation of order,
implies that $T$ is positively sub-homogeneous, and so, Corollaries~\ref{cor-symm2} and~\ref{th-ourcw} apply to the map $T$.

If the matrices $A$ and $B$ are positive definite, then $T$ is known to be a strict contraction in Thompson metric~\cite{liverani}. It follows that $T$ has a unique fixed point in $\Int S_n^+$ to which
every orbit converges. In particular, the escape rate $\rho(T)$ is zero.

We next examine a more interesting case in which $B$ is of rank one, leading
to a positive escape rate. Assume that $n=2$, write $B=vv^*$
for some (column) vector $v$, and assume that $M$ is of the form $M=\alpha I$, with $\alpha>1$, where $I$ is the identity matrix.

We claim that $\rho(T)=2\log\alpha$. To see this,
let $u$ denote a non-zero vector orthogonal to $v$, and consider
$U=uu^*$. Then, it is easily checked that the radial extension of $T$
satisfies $\hat{T}(\gamma U)=A+\alpha^2 \gamma U$
for all $\gamma>0$, and so, $\hat{T}_r(U)=\alpha^2U$.
Using~\eqref{e-new2}, we deduce that $\rho(T)\geq 2\log \alpha$.
To show that the equality holds, we observe that $T(X)\leq A+MXM^*$.
Hence, for all $s>0$, $T(sI)\leq A +\alpha^2 sI \leq (\lambda_1(A)s^{-1}+\alpha^2 ) sI$, where $\lambda_1(A)$ denotes the maximal eigenvalue of $A$,
which by~\eqref{e-new1}, implies that $\rho(T)\leq \log(\lambda_1(A)s^{-1}+\alpha^2)$.
Letting $s$ tend to infinity, we arrive at $\rho(T)\leq 2\log\alpha$,
and so $\rho(T)=2\log\alpha$.

A possible choice of the horofunction $h$ appearing in Theorem~\ref{maintheorem}
is
\[h(X)=-\Funk(X,U)+\Funk(I,U)\enspace,
\]
where the basepoint is the identity matrix $I$. Then, the linear form $x\mapsto\s(w,x)$ constructed in the proof of Corollary~\ref{cor-symm2} can be checked to be $X\mapsto \s(U,X)$ (or any scalar multiple of this form).

For $n=2$, the cone of positive semidefinite matrices
$S_n^+$ has the shape of a Lorentz (ice-cream) cone.
Then, the superlevel set $h(X)\geq \lambda$ is nothing but the interior of
the translated Lorentz cone $\exp(\lambda-\Funk(I,U)) U+ S_2^+$.
This is illustrated in Figure~\ref{fig-denjoy}.

\end{example}

\begin{figure}[htbp]%
\begin{center}\includegraphics[scale=0.8]{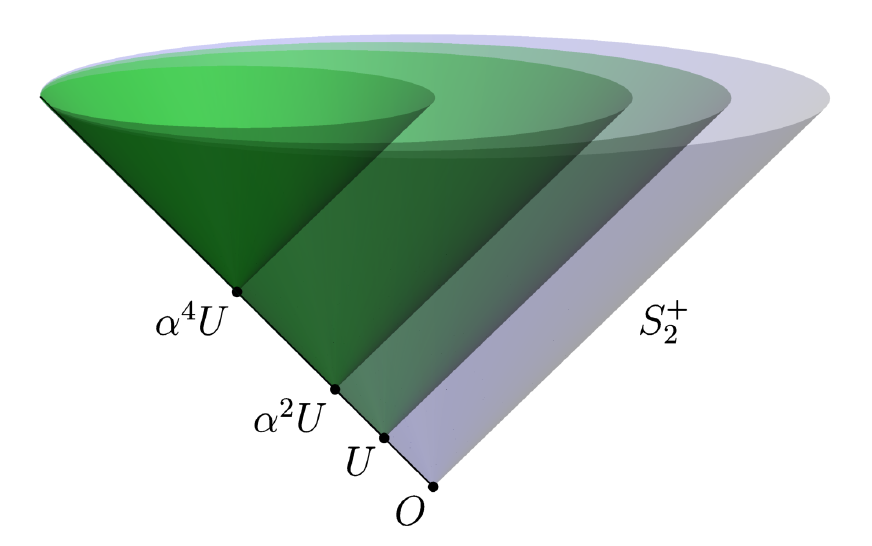}\end{center}
\caption{Theorem~\ref{maintheorem} applied to a discrete Riccati operator on the cone of positive definite matrices $\Int S_2^+$ (Example~\ref{ex-riccati}). The horoballs (superlevel sets) of the horofunction $h$ are nested cones (in green) intersecting the cone $S_2^+$ (in blue) along the ray generated by the matrix $U$.
The map $T$ sends every horoball $h\geq \lambda$ to the horoball $h\geq \lambda+\rho(T)$. Three such horoballs are represented, with apices $U$, $\alpha^2U$, $\alpha^4U$, corresponding to $\lambda=0,\rho(T),2\rho(T)$, respectively.}
\label{fig-denjoy}
\end{figure}

\paragraph*{\hspace*{\parindent}Acknowledgements.}
 This work originates from discussions
of Brian Lins and the first author
at the AIM workshop on non-negative matrix theory in Palo Alto in 2008,
where they jointly obtained an early
version of Theorem~\ref{maintheorem} concerning the special Banach space case. The first author thanks Brian Lins for these discussions, and also for
more recent observations. 

\end{document}